\newtheorem{theorem}{Theorem}[section]
\newtheorem{corollary}{Corollary}
\newtheorem{example}{Example}
\newtheorem{lemma}[theorem]{Lemma}
\theoremstyle{definition}
\newtheorem{definition}[theorem]{Definition}
\begin{document}
\title[Improved Estimates of Survival Prob. via Isospectral Transformations]{Improved Estimates of Survival Probabilities via Isospectral Transformations}

\author{L. A. Bunimovich$^1$}

\author{B. Z. Webb$^2$}

\keywords{Open System, Markov Partition, Delayed First Return Map, Structural Set.}

\subjclass[2000]{05C50, 15A18, 37C75}

\maketitle


\begin{center}
$^{1}$ \smaller{ABC Math Program and School of Mathematics, Georgia Institute of Technology, 686 Cherry Street, Atlanta, GA 30332, USA}\\
$^{2}$ Rockefeller University, Laboratory of Statistical Physics, 1230 York Avenue, New York, NY 10065, USA\\
E-mail: bunimovih@math.gatech.edu and bwebb@rockefeller.edu
\end{center}

\begin{abstract}
We consider open systems generated from one-dimensional maps that admit a finite Markov partition and use the recently developed theory of isospectral graph transformations to estimate a system's survival probabilities. We show that these estimates are better than those obtained through a more direct approach.
\end{abstract}

\section{Introduction}
Recently, a nontrivial relation between the dynamics of networks and the dynamics of open systems has been found. This discovery has lead to advances in the analysis of network dynamics and has also introduced a new research direction concerned with the finite time properties of open systems with finite sized holes.

The main idea behind this approach is that network dynamics can be broken down into three parts; (i) the network's graph structure, often referred to as its \emph{topology}, (ii) the local or intrinsic dynamics of the network elements, and (iii) the network interactions between these elements. To each of (i)-(iii) there is an associated dynamical system, which together can be used to characterize the dynamics of the network \cite{AB07,BB11,B12,BY10}. The same approach can be used to study the topological properties of open dynamical systems \cite{AB10,DW12}.

One of the fundamental concerns in the study of networks is understanding the relation between a network's structure and its dynamics. However, the networks we often encounter in either nature or engineering are typically very large, i.e. have a large number of elements. It is therefore tempting to want to reduce such networks by excluding some subset of these elements while preserving some important characteristic(s) of the original network.

Since real networks are typically dynamic, one might first consider the spectrum of the network's weighted adjacency matrix to be one such a characteristic worth preserving. That is, one could hope to find a way of reducing a network while maintaining its spectrum. However, it seems immediately clear that such an ``isospectral reduction" is impossible based on the Fundamental Theorem of Algebra.

In fact, it is possible to reduce the size of a network (or matrix) while preserving its spectrum. This theory of isospectral reductions can be found in \cite{BW10} and is part of a larger theory of isospectral transformations introduced in the paper. Such transformations were used in \cite{BW09} to improve each of the classical eigenvalue estimates associated with Gershgorin, et. al. \cite{Varga09}.

In the present paper we make use of this interplay between dynamical networks and open dynamical systems. We obtain estimates of survival probabilities for a class of one dimensional maps which admit a finite Markov partition. Most importantly, we shown that any isospectral transformation corresponding to an open system leads to sharper estimates of system's survival probabilities.

\section{Open and Closed Dynamical Systems}
Let $f:I\rightarrow I$ where $I=[0,1]$. For $0=q_0<q_1<\dots<q_{m-1}<q_m=1$ we let $\xi_i=(q_{i-1},q_i]$ for $1\leq i\leq m$ and assume that the following hold. First, the function $f|_{\xi_i}$ is differentiable for each $1\leq i\leq m$. Second, the sets $\xi_i=(q_{i-1},q_i]$ form a \emph{Markov partition} $\xi=\{\xi_i\}_{i=1}^m$ of $f$. That is, for each $1\leq i\leq m$ the closure $cl(f(\xi_i))$ is the interval $[q_{j},q_{j+k}]$ for some $k\geq 1$ and $j$ that depends on $i$.

We consider the situation where orbits of $f:I\rightarrow I$ escape through an element of the Markov partition $\xi=\{\xi_i\}_{i=1}^m$ or, more generally, some union $H$ of these partition elements. Equivalently, we can modify the function $f$ so that orbits cannot leave the set $H$ once they have entered it. Here, orbits that enter $H$ are considered to have escaped from the system. This later approach of modifying $f|_H$ turns out to be more convenient for our discussion and will be the direction we take. In what follows we let $M=\{1,\dots m\}$.

\begin{definition}
Let $H=\bigcup_{i\in \mathcal{I}}{\xi_i}$ where $\mathcal{I}\subset M$. We introduce the new map $f_H:I\rightarrow I$ defined by
$$f_H(x)=
\begin{cases}
f(x) \ \ &\text{if} \ \ x\notin H\\
x \ \ &\text{otherwise}
\end{cases}.$$
We call the set $H$ a \emph{hole} and the function $f_H:I\rightarrow I$ the \emph{open dynamical system} generated by the (closed) dynamical system $f:I\rightarrow I$ over $H$.
\end{definition}

The partition $\xi$ will remain a Markov partition of the open dynamical system $f_H:I\rightarrow I$ but the dynamics of the original system $f:I\rightarrow I$ will have been modified such that each point in $H$ is now a fixed point.

For $n\geq 0$ let
\begin{align*}
X^n(f_H)&=\{x\in I:f^n(x)\in H, \ f^k(x)\notin H, \ 0\leq k<n\}\\
      &=\{x\in I:f_H^n(x)\in H, \ f_H^k(x)\notin H, \ 0\leq k<n\}; \ \text{and}\\
Y^n(f_H)&=\{x\in I:f^k(x)\in H, \ \text{for some} \ k, \ 0\leq k\leq n\}\\
      &=\{x\in I:f_H^k(x)\in H, \ \text{for some} \ k, \ 0\leq k\leq n\}.
\end{align*}
The set $X^n(f_H)$ consists of those points that escape through the hole $H$ at time $n$ while $Y^n(f_H)$ are those points that escape through $H$ before time $n+1$.

For the moment suppose $\mu$ is a probability measure on $I$, i.e. $\mu(I)=1$. Then $\mu(X^n(f_H))$ can be treated as the probability that an orbit of $f$ enters $H$ for the first time at time $n$ and $\mu(Y^n(f_H))$ the probability that an orbit of $f$ enters $H$ before time $n+1$. In this regard,
$$P^n(f_H)=1-\mu(Y^n(f_H))$$
represents the probability that a typical point of $I$ does not fall into the hole $H$ by time $n$. For this reason, the quantity $P^n(f_H)$ is called the \emph{survival probability} at time $n$ of the dynamical system $f_H$ for the measure $\mu$.

One of fundamental problems in the theory of open systems is determining or finding ways of approximating $\mu(X^n(f_H))$ and $\mu(Y^n(f_H))$ for finite $n\geq 0$. In the following section we give exact formulae for these quantities in the case where $f_H$ is a piecewise linear function with nonzero slope and $\mu$ is Lebesgue measure. In section \ref{sec:3} we remove the assumption that $f_H$ is piecewise linear and present a method for estimating $\mu(X^n(f_H))$ and $\mu(Y^n(f_H))$ for functions that are piecewise nonlinear.

\section{Piecewise Linear Functions}\label{sec:3}
As a first step, we consider those open systems $f_H:I\rightarrow I$ that are linear when restricted to the elements of the partition $\xi$. More formally, suppose $H=\bigcup_{i\in\mathcal{I}}\xi_i$ for some $\mathcal{I}\subset M$. Let $\mathcal{L}$ be the set of all open systems $f_H:I\rightarrow I$ such that
$$|f_H^{\prime}(x)|=c_i>0 \ \ \text{for} \ \ x\in \xi_i \ \ \text{and} \ \ i\notin\mathcal{I}$$
where each $c_i\in\mathbb{R}$. The set $\mathcal{L}$ consists of all open systems that have a nonzero constant slope when restricted to any $\xi_i\nsubseteq H$.

To each $f_H\in\mathcal{L}$ there is an associated matrix that can be used to compute $\mu(X^n(f_H))$ and $\mu(Y^n(f_H))$. To define this matrix let
\begin{equation}\label{eq:part}
\xi_{ij}=\xi_i\cap f^{-1}(\xi_j) \ \text{for} \ 1\leq i,j\leq m.
\end{equation}

\begin{definition}
Let $f_H\in\mathcal{L}$ where $H=\bigcup_{i\in\mathcal{I}}\xi_i$ for some $\mathcal{I}\subset M$.
The matrix $A_H\in\mathbb{R}^{m\times m}$ given by
$$(A_H)_{ij}=
\begin{cases}
|f^{\prime}(x)|^{-1} \ &\text{for} \ \ x\in\xi_{ij}\neq\emptyset, \ i\notin\mathcal{I},\\
0 &\text{otherwise}
\end{cases} \ \ 1\leq i,j\leq m
$$ is called the \emph{weighted transition matrix} of $f_H$.
\end{definition}

Associated with the open system $f_H:I\rightarrow I$ there is also a directed graph $\Gamma=(V,E_H,\omega)$ with \emph{vertices} $V$ and \emph{edges} $E_H$. For $V=\{v_1,\dots,v_m\}$ we let $e_{ij}$ denote the edge from vertex $v_i$ to $v_j$.

\begin{definition}
 Let $f_H:I\rightarrow I$ where $H=\bigcup_{i\in\mathcal{I}}\xi_i$ for some $\mathcal{I}\subset M$. We define $\Gamma_H=(V,E_H)$ to be the graph with\\
\indent (a) vertices $V=\{v_1,\dots,v_m\}$ and;\\
\indent (b) edges $E_H=\{e_{ij}:cl(\xi_j)\subseteq cl(f(\xi_i)), \ i\notin\mathcal{I}\}$.\\
 The graph $\Gamma_H$ is called the \emph{transition graph} of $f_H$.
\end{definition}

The vertex set $V$ of $\Gamma_H$ represents the elements of the Markov partition $\xi=\{\xi_i\}_{i=1}^m$ and the edge set $E_H$, the possible transitions between the elements of $\xi$. Hence, $e_{ij}\in E_H$ only if there is an $x\in\xi_i\nsubseteq H$ such that $f_H(x)\in\xi_j$, i.e. it is possible to transition from $\xi_i$ to $\xi_j$. We note that as $H=\emptyset$ is a possible hole, the original (closed) system $f:I\rightarrow I$ has a well defined transition graph which we denote by $\Gamma$.

Note that the matrix $A_H$ and the graph $\Gamma_H$ do not carry the same information as $\Gamma_H$ only designates how orbits can transition between elements of $\xi$ whereas $A_H$ additionally gives each of these transitions a weight. However, the graph $\Gamma_H$ gives us a way of visualizing how orbits escape from the system, which will be useful in the following sections. An open system and its transition graph are demonstrated in the following example.

\begin{example}\label{ex:1}
Let the function $f:I\rightarrow I$ be the tent map
$$f(x)=\begin{cases}
2x &  0\leq x\leq 1/2,\\
2-2x & 1/2 < x\leq 1
\end{cases}$$ with Markov partition $\xi=\{(0,1/4],(1/4,1/2],(1/2,3/4],(3/4,1]\}$. Here we consider the hole $H=(0,1/4]$. The open system $f_H:I\rightarrow I$ is shown in figure \ref{fig:1} (left) with the graph $\Gamma_H$ (right).

As $H=\xi_1$ we emphasis this in $\Gamma_H$ by drawing the vertex $v_1$ as an open circle, i.e. as a hole. We note that the only difference between the transition graph $\Gamma$ of $f:I\rightarrow I$ and $\Gamma_H$ is that there are no edges origination from $v_1$ in $\Gamma_H$. In this sense a hole $H$ is an absorbing state since nothing leaves $H$ once it enters.
\end{example}

\begin{figure}
    \begin{tabular}{cc}
    \begin{overpic}[scale=.7]{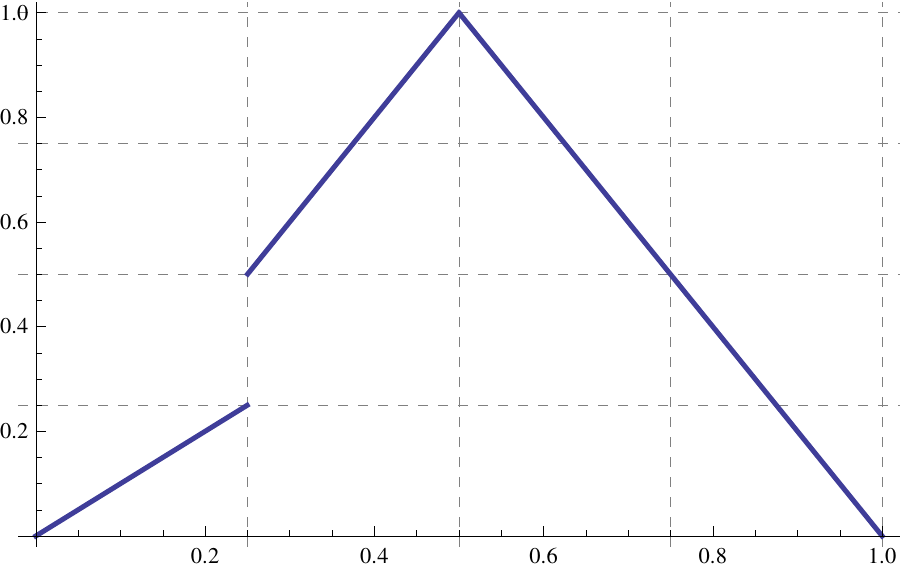}

    \put(37,-5){$\xi_2$}
    \put(6,-5){$\xi_1=H$}
    \put(60,-5){$\xi_3$}
    \put(85,-5){$\xi_4$}

    \end{overpic} &
    \begin{overpic}[scale=.45]{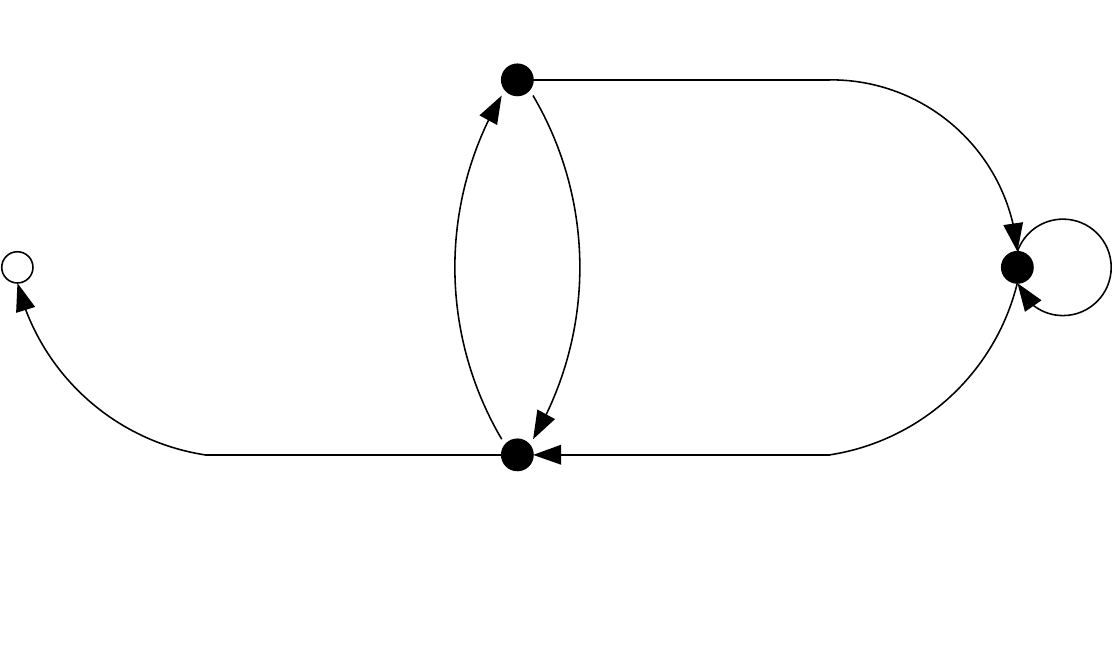}
    \put(42,-1){$\Gamma_H$}




    \put(43,56){$v_2$}
    \put(-2,39){$v_1$}
    \put(43,11){$v_4$}
    \put(91,42){$v_3$}


    \end{overpic}
    \end{tabular}
\caption{The transition graph $\Gamma_H$ (right) of the open system $f_H:I\rightarrow I$ (left) in example \ref{ex:1}.}\label{fig0}\label{fig:1}
\end{figure}

Let $\mathbf{1}=[1,\dots,1]$ be the $1\times m$ vector of ones and $\mathbf{e}_H$ the $m\times 1$ vector given by
$$(\mathbf{e}_H)_{i}=\begin{cases}
\mu(\xi_i) & \text{if} \ \ i\in\mathcal{I}\\
0 & \text{otherwise}
\end{cases}.$$

\begin{theorem}\label{theorem1}
If $f_H\in \mathcal{L}$ and $n\geq 0$ then
\begin{align}
\mu(X^n(f_H))&=\mathbf{1}A_H^n\mathbf{e}_H; \ \text{and}\\
\mu(Y^n(f_H))&=\mathbf{1}\big(\sum_{i=0}^nA_H^i\big)\mathbf{e}_H.
\end{align}
\end{theorem}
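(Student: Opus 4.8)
The plan is to prove the first identity $\mu(X^n(f_H)) = \mathbf{1} A_H^n \mathbf{e}_H$ by induction on $n$, establishing along the way a finer claim that tracks \emph{where} surviving orbits sit. For a multi-index description, let $X^n_j(f_H) = \{x \in I : f^n(x) \in \xi_j,\ f^k(x) \notin H \text{ for } 0 \le k < n\}$ be the set of points that have not yet escaped by time $n$ and currently lie in $\xi_j$; then $X^n(f_H) = \bigcup_{j \in \mathcal{I}} X^n_j(f_H)$ is a disjoint union (disjoint because the $\xi_j$ partition $I$), so $\mu(X^n(f_H)) = \sum_{j \in \mathcal{I}} \mu(X^n_j(f_H))$. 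The key claim to prove by induction is that $\mu(X^n_j(f_H)) = \big(\mathbf{1} A_H^n\big)_j \cdot [\text{contribution bookkeeping}]$; more precisely, letting $w^{(n)}$ be the row vector with $w^{(n)}_j = \mu(\{x : f^k(x) \notin H,\ 0 \le k < n,\ f^n(x) \in \xi_j\})$, I will show $w^{(n)} = \mathbf{1} A_H^n$ restricted appropriately — actually the cleanest formulation is $w^{(n)}_j$ equals the sum over all admissible length-$n$ itineraries ending at $\xi_j$ of the product of branch-inverse-slopes, which is exactly the $(\,\cdot\,,j)$ entry pattern of $A_H^n$ summed against $\mathbf 1$.

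First I would nail down the base case and the one-step map. For $n = 0$: an orbit "escapes at time $0$" iff $x \in H$, so $X^0(f_H) = H$ and $\mu(X^0(f_H)) = \sum_{i \in \mathcal{I}} \mu(\xi_i) = \mathbf{1}\mathbf{e}_H = \mathbf{1}A_H^0\mathbf{e}_H$ since $A_H^0 = \mathrm{Id}$. For the inductive step, the heart of the matter is the change-of-variables computation on a single Markov branch: since $f_H \in \mathcal{L}$ has constant slope $\pm c_i$ on each $\xi_i$ with $i \notin \mathcal{I}$, and the Markov property guarantees that $f(\xi_i)$ is a union of whole partition elements, for any measurable $S \subseteq \xi_j$ we have $\mu\big(\xi_i \cap f^{-1}(S)\big) = c_i^{-1}\mu(S) = (A_H)_{ij}\,\mu(S)$ whenever $\xi_{ij} \neq \emptyset$ and $i \notin \mathcal{I}$, and $= 0$ when $i \in \mathcal{I}$ (no preimage contributes because such $x$ are fixed, hence never reach $S \subseteq \xi_j$ with $j\in\mathcal I$ after previously being outside $H$ — here one uses that points in $H$ stay in $H$). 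Summing over $i$ and using that the $\xi_i$ partition $I$ gives the recursion $w^{(n+1)} = w^{(n)} A_H$, from which $w^{(n)} = \mathbf{1} A_H^n$ by induction, and then restricting the final index to $\mathcal{I}$ and pairing with $\mathbf{e}_H$ yields $\mu(X^n(f_H)) = \mathbf{1}A_H^n\mathbf{e}_H$.

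For the second identity, observe that $Y^n(f_H) = \bigsqcup_{i=0}^n X^i(f_H)$ is a disjoint union: a point escapes for the \emph{first} time at a unique time $i$, and $Y^n$ collects exactly those with first-escape time $\le n$ (here the first-return/first-escape structure of the definition of $X^i$ is what makes the union disjoint). Hence $\mu(Y^n(f_H)) = \sum_{i=0}^n \mu(X^i(f_H)) = \sum_{i=0}^n \mathbf{1}A_H^i\mathbf{e}_H = \mathbf{1}\big(\sum_{i=0}^n A_H^i\big)\mathbf{e}_H$ by linearity.

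I expect the main obstacle to be the bookkeeping at the hole in the inductive step: one must be careful that the recursion $w^{(n+1)} = w^{(n)} A_H$ correctly excludes orbits that have already fallen into $H$, and the reason this works is precisely that the rows of $A_H$ indexed by $\mathcal{I}$ are identically zero, so once the itinerary enters $H$ it contributes nothing further — matching the "absorbing state" picture. A secondary technical point is handling the boundary points $q_0, \dots, q_m$ and the half-open nature of the $\xi_i$; since these form a finite (hence $\mu$-null) set and the partition is genuinely a partition of $I$, they do not affect any of the measure identities, but this should be remarked on. The differentiability and Markov hypotheses on $f$ enter only through the clean change-of-variables formula $\mu(\xi_i \cap f^{-1}(S)) = c_i^{-1}\mu(S)$, which is where piecewise linearity is essential and where the nonlinear case in the next section will require estimates rather than equalities.
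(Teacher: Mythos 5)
Your overall strategy is sound and matches the spirit of how the paper handles such computations (the paper in fact states this theorem without proof; its proof of Theorem \ref{thm:main} runs on the same per-itinerary products of inverse slopes paired against $\mathbf{e}_H$): decompose by itinerary, use the exact change of variables $\mu(\xi_i\cap f^{-1}(S))=c_i^{-1}\mu(S)$ on Markov branches, rely on the zero rows of $A_H$ indexed by $\mathcal{I}$ to make $H$ absorbing, and get the $Y^n$ identity from the disjointness of the first-escape times. The $Y^n$ part and the $n=0$ base case for $X^n$ are fine as written.

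However, the inductive step as stated would fail. With your definition $w^{(n)}_j=\mu\{x: f^k(x)\notin H,\ 0\le k<n,\ f^n(x)\in\xi_j\}$, the recursion $w^{(n+1)}=w^{(n)}A_H$ and the claim $w^{(n)}=\mathbf{1}A_H^n$ are false whenever the partition elements have unequal lengths: already $w^{(0)}_j=\mu(\xi_j)\neq 1$, and one step of the dynamics gives $w^{(n+1)}_j=\sum_{i\notin\mathcal{I}} w^{(n)}_i\,\mu(\xi_{ij})/\mu(\xi_i)=\sum_i w^{(n)}_i\,c_i^{-1}\mu(\xi_j)/\mu(\xi_i)$, which differs from $\big(w^{(n)}A_H\big)_j=\sum_i w^{(n)}_i c_i^{-1}$ by the factors $\mu(\xi_j)/\mu(\xi_i)$. (A two-interval Markov map with $\mu(\xi_1)=1/3$, $\mu(\xi_2)=2/3$ already breaks it.) The correct bookkeeping is for the normalized vector $u^{(n)}_j=w^{(n)}_j/\mu(\xi_j)$, which does satisfy $u^{(0)}=\mathbf{1}$ and $u^{(n+1)}=u^{(n)}A_H$, so that $w^{(n)}_j=\big(\mathbf{1}A_H^n\big)_j\,\mu(\xi_j)$; equivalently, one can argue directly on cylinders, $\mu(\xi_{i_0\cdots i_n})=c_{i_0}^{-1}\cdots c_{i_{n-1}}^{-1}\mu(\xi_{i_n})$, and sum over admissible itineraries ending in $\mathcal{I}$. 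Your final pairing with $\mathbf{e}_H$ silently reinstates the missing $\mu(\xi_j)$ weights, which is why the end formula looks right, but as written the intermediate claim and the recursion it rests on are incorrect; the fix is small and the rest of your argument (including your remarks on null boundary points and on the absorbing rows) then goes through.
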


For the matrix $B\in \mathbb{R}^{m\times m}$ let $\sigma(B)$ and $\rho(B)$ denote the set of all \textit{eigenvalues} and \textit{spectral radius} of $B$ respectively. We then have the following corollaries to theorem \ref{theorem1}.

\begin{corollary}\label{corollary1}
If $f_H\in \mathcal{L}$ and $\rho(A_H)<1$ then
\begin{align*}
\mu(Y^n(f_H))&=\mathbf{1}(I-A_H)^{-1}(I-A_H^{n+1})\mathbf{e}_H; \ \text{and}\\
\lim_{n\rightarrow\infty}\mu(Y^n(f_H))&=\mathbf{1}(I-A_H)^{-1}\mathbf{e}_H.
\end{align*}
where $I$ is the identity matrix.
\end{corollary}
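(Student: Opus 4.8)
The plan is to derive Corollary \ref{corollary1} directly from Theorem \ref{theorem1} by the standard finite geometric series identity for matrices. By Theorem \ref{theorem1}, $\mu(Y^n(f_H))=\mathbf{1}\big(\sum_{i=0}^n A_H^i\big)\mathbf{e}_H$, so it suffices to evaluate the matrix sum $\sum_{i=0}^n A_H^i$ in closed form. First I would observe that the hypothesis $\rho(A_H)<1$ guarantees that $1\notin\sigma(A_H)$, so that $I-A_H$ is invertible and $(I-A_H)^{-1}$ is well defined. Then, from the telescoping identity $(I-A_H)\sum_{i=0}^n A_H^i = I - A_H^{n+1}$ (which holds for any square matrix, since $A_H$ commutes with itself), one multiplies on the left by $(I-A_H)^{-1}$ to obtain $\sum_{i=0}^n A_H^i = (I-A_H)^{-1}(I-A_H^{n+1})$. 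Substituting this back into the expression from Theorem \ref{theorem1} yields the first displayed formula.

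For the second formula I would pass to the limit $n\to\infty$. The key point is that $\rho(A_H)<1$ implies $A_H^{n+1}\to 0$ as $n\to\infty$ (in any matrix norm, e.g. by Gelfand's formula $\rho(A_H)=\lim_k \|A_H^k\|^{1/k}$, or by putting $A_H$ in Jordan form and noting each Jordan block of eigenvalue $\lambda$ with $|\lambda|<1$ has powers tending to $0$). Hence $I-A_H^{n+1}\to I$, and since matrix multiplication and the fixed linear functional $v\mapsto \mathbf{1}v\,\mathbf{e}_H$ are continuous, $\mu(Y^n(f_H))=\mathbf{1}(I-A_H)^{-1}(I-A_H^{n+1})\mathbf{e}_H \to \mathbf{1}(I-A_H)^{-1}\mathbf{e}_H$. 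Equivalently, $\sum_{i=0}^\infty A_H^i$ converges to the Neumann series $(I-A_H)^{-1}$, and one applies the functional termwise.

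Honestly, there is no real obstacle here: this is a routine corollary, and the only thing to be slightly careful about is that the geometric-series identity requires commutativity (which is automatic for powers of a single matrix) and that the invertibility of $I-A_H$ must be justified from $\rho(A_H)<1$ rather than assumed. If one wanted to avoid even mentioning Gelfand's formula, one could alternatively argue entirely within the scalar series $\mathbf{1}A_H^i\mathbf{e}_H=\mu(X^i(f_H))\ge 0$: these are nonnegative and, by Theorem \ref{theorem1} together with $\rho(A_H)<1$, their partial sums $\mu(Y^n(f_H))$ are bounded (the full Neumann series converges), so the tail $A_H^{n+1}\mathbf{e}_H$ must vanish in the limit. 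Either route is short; I would present the telescoping identity plus $A_H^{n+1}\to 0$ as the cleanest writeup.
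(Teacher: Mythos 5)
Your proof is correct and is exactly the intended derivation: the paper leaves this corollary as an immediate consequence of Theorem \ref{theorem1}, obtained via the telescoping identity $(I-A_H)\sum_{i=0}^n A_H^i = I - A_H^{n+1}$ together with $A_H^{n+1}\to 0$ when $\rho(A_H)<1$. Your care in noting that $\rho(A_H)<1$ justifies the invertibility of $I-A_H$ (rather than assuming it) is appropriate, and no further elaboration is needed.
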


\begin{corollary}\label{corollary2}
Suppose $f_H\in \mathcal{L}$. If $0<\rho(A_H)<1$ then $\displaystyle{\lim_{n\rightarrow\infty}P^n(f_H)=0}$. If $\rho(A_H)=0$ then $\mu(Y^n(f_H))=1$ for some $n<\infty$.
\end{corollary}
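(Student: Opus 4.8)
The plan is to read off both statements from the closed-form expressions already established in Theorem \ref{theorem1} and Corollary \ref{corollary1}, treating the two hypotheses on $\rho(A_H)$ separately. Recall that $P^n(f_H)=1-\mu(Y^n(f_H))$, so controlling $\mu(Y^n(f_H))$ controls $P^n(f_H)$.

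First suppose $0<\rho(A_H)<1$. Then $I-A_H$ is invertible and Corollary \ref{corollary1} applies, giving $\lim_{n\to\infty}\mu(Y^n(f_H))=\mathbf{1}(I-A_H)^{-1}\mathbf{e}_H$. I would argue that this limit equals $1$, i.e. that asymptotically almost every point escapes. The natural way to see this is to expand $(I-A_H)^{-1}\mathbf{e}_H=\sum_{i\ge 0}A_H^i\mathbf{e}_H$ (convergent since $\rho(A_H)<1$) and to note that $\mathbf{1}\big(\sum_{i=0}^n A_H^i\big)\mathbf{e}_H=\mu(Y^n(f_H))$ is a nondecreasing sequence of measures bounded above by $\mu(I)=1$; hence the limit exists and is at most $1$. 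For the reverse inequality one uses that the partition $\xi$ is Markov and that $H\ne\emptyset$ contains an interval on which some preimage structure forces $\bigcup_{n\ge 0}Y^n(f_H)$ to have full measure — equivalently, that the piecewise-linear expanding map $f_H$ has no invariant subset of positive measure disjoint from $H$ other than $H$ itself (here the condition $\rho(A_H)>0$ rules out the degenerate situation where $f_H$ maps nothing back out). Thus $\lim_n \mu(Y^n(f_H))=1$ and so $\lim_n P^n(f_H)=0$.

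Next suppose $\rho(A_H)=0$. Since $A_H\in\mathbb{R}^{m\times m}$ is a nonnegative matrix with spectral radius $0$, it is nilpotent: $A_H^k=0$ for some $k\le m$. Consequently $A_H^i=0$ for all $i\ge k$, so by Theorem \ref{theorem1},
\[
\mu\big(Y^n(f_H)\big)=\mathbf{1}\Big(\sum_{i=0}^n A_H^i\Big)\mathbf{e}_H=\mathbf{1}\Big(\sum_{i=0}^{k-1}A_H^i\Big)\mathbf{e}_H
\quad\text{for all } n\ge k-1,
\]
so the sequence $\mu(Y^n(f_H))$ is eventually constant, equal to $\mathbf{1}\big(\sum_{i=0}^{k-1}A_H^i\big)\mathbf{e}_H$. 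It remains to show this constant value is exactly $1$, i.e. that $Y^{k-1}(f_H)$ has full Lebesgue measure. This follows because nilpotency of $A_H$ means every point of $I\setminus H$ is mapped into $H$ within $k$ steps (no transition cycle avoiding $H$ survives, and the measure bookkeeping of Theorem \ref{theorem1} accounts for all of $I$), whence $\bigcup_{j=0}^{k-1}f_H^{-j}(H)=I$ up to a set of measure zero and $\mu(Y^n(f_H))=1$ for all $n\ge k-1<\infty$.

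The main obstacle is the lower bound $\lim_n\mu(Y^n(f_H))\ge 1$ in the case $0<\rho(A_H)<1$: one must rule out a positive-measure set of points that never enter $H$. I would handle this by observing that such a set would carry an $f_H$-invariant measure supported off $H$, contradicting either the expanding/Markov structure of $f_H$ or, more directly, the fact that $\rho(A_H)<1$ forces $A_H^n\to 0$ and hence $\mathbf{1}A_H^n\mathbf{e}\to 0$ for every nonnegative vector $\mathbf{e}$, leaving no "mass" that can avoid the hole indefinitely. The $\rho(A_H)=0$ case is comparatively routine once nilpotency of a nonnegative matrix with zero spectral radius is invoked.
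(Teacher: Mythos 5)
Your reduction of the first statement to showing $\lim_n\mu(Y^n(f_H))=1$ is the right move, but that limit being $1$ is precisely the content of the corollary, and your proposal never actually proves it. The two routes you sketch both have problems. The invariant-measure/expansivity route is not available as stated: membership in $\mathcal{L}$ only requires nonzero constant slopes $c_i>0$ on the elements outside $H$, not $c_i>1$, so $f_H$ need not be expanding, and no ergodicity or conservativity off the hole has been established that would let you say ``no positive-measure set avoids $H$'' — indeed that is exactly what the hypothesis $\rho(A_H)<1$ is supposed to deliver. The second route ($A_H^n\to 0$ leaves no mass) is the correct idea but begs the question: to use it you need an identity expressing the measure of the set of points that have \emph{not} escaped by time $n$ in terms of $A_H^n$, and you never state or derive one. (Also, your parenthetical that $\rho(A_H)>0$ ``rules out the degenerate situation where $f_H$ maps nothing back out'' misreads the hypothesis; the case $\rho(A_H)=0$ is split off only because it gives the stronger conclusion of full escape in finite time, and the limit statement holds there too.)

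The missing step can be supplied with one short computation that your proposal does not contain. Let $\mathbf{m}=[\mu(\xi_1),\dots,\mu(\xi_m)]^T$. For $i\notin\mathcal{I}$ the Markov property and linearity give $\mu(f(\xi_i))=c_i\,\mu(\xi_i)$, and since $f(\xi_i)$ is (up to closure) a union of partition elements, the $i$th row of $A_H$ satisfies $\sum_j (A_H)_{ij}\mu(\xi_j)=c_i^{-1}\mu(f(\xi_i))=\mu(\xi_i)$; for $i\in\mathcal{I}$ the row is zero. Hence $(I-A_H)\mathbf{m}=\mathbf{e}_H$. If $0<\rho(A_H)<1$ this yields $\mathbf{1}(I-A_H)^{-1}\mathbf{e}_H=\mathbf{1}\mathbf{m}=1$, so Corollary \ref{corollary1} gives $\lim_n\mu(Y^n(f_H))=1$ and $\lim_n P^n(f_H)=0$. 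Equivalently (and this also repairs your $\rho(A_H)=0$ case, where ``the measure bookkeeping of Theorem \ref{theorem1} accounts for all of $I$'' is asserted rather than shown), summing the same identity gives $1-\mu(Y^n(f_H))=\mathbf{1}A_H^{n}\,\mathbf{m}$ for a suitable restriction of $\mathbf{m}$ to the non-hole indices, which is the cylinder-set computation parallel to the proof of Theorem \ref{theorem1}: when $A_H$ is nilpotent (as it is for a nonnegative matrix with $\rho(A_H)=0$, which you correctly invoke), the right-hand side vanishes for all $n\geq m$, so $\mu(Y^n(f_H))=1$ for some finite $n$; when $\rho(A_H)<1$ it tends to $0$. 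Your eventual-constancy observation for $\rho(A_H)=0$ is fine, but without this identity (or the equivalent pigeonhole argument that an itinerary avoiding $\mathcal{I}$ of length exceeding $m$ would force a cycle in $\Gamma_H|_{\bar S}$-type subgraph off the hole, contradicting nilpotency, spelled out rather than gestured at), the claim that the constant equals $1$ remains unproved.
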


A matrix $B\in\mathbb{R}^{m\times m}$ is called \textit{defective} if it does not have an eigenbasis, i.e. if there are not enough linearly independent eigenvectors of $B$ to form a basis of $\mathbb{R}^m$. A matrix with an eigenbasis is called \textit{nondefective}.

\begin{corollary}\label{corollary3}
Let $f_H\in \mathcal{L}$ and suppose the matrix $A_H$ is nondefective with eigenpairs $\{(\lambda_1,\mathbf{v}_1),\dots,(\lambda_k,\mathbf{v}_k)\}$ with no eigenvalue equal to 1. Then $\mathbf{e}_H=\sum_{i=1}^k c_i \mathbf{v}_i$ for some $c_1,\dots,c_k\in\mathbb{C}$ and
\begin{align}
\mu(X^n(f_H))& =\sum_{i=1}^k c_i s_i\lambda_{i}^{n}\label{eq:3}\\
\mu(Y^n(f_H))& =\sum_{i=1}^k c_i s_i\left(\frac{1-\lambda_{i}^{n+1}}{1-\lambda_i}\right)\label{eq:4}
\end{align}
where $s_i=\mathbf{1}\mathbf{v}_i$.
\end{corollary}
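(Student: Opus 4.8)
The plan is to diagonalize: I would expand $\mathbf{e}_H$ and the powers $A_H^n$ in an eigenbasis of $A_H$ supplied by the nondefectiveness hypothesis, and then substitute into the two identities of Theorem~\ref{theorem1}. Since $A_H\in\mathbb{R}^{m\times m}$ is nondefective, $\mathbb{C}^m$ has a basis of eigenvectors of $A_H$; write it as $\{\mathbf{v}_1,\dots,\mathbf{v}_k\}$ with $A_H\mathbf{v}_i=\lambda_i\mathbf{v}_i$ (so $k=m$, repeated eigenvalues allowed). In particular $\mathbf{e}_H=\sum_{i=1}^k c_i\mathbf{v}_i$ for suitable $c_i\in\mathbb{C}$, which is the asserted decomposition.

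Applying $A_H^n$ termwise gives $A_H^n\mathbf{e}_H=\sum_{i=1}^k c_i\lambda_i^n\mathbf{v}_i$, so that, by Theorem~\ref{theorem1},
\[
\mu(X^n(f_H))=\mathbf{1}A_H^n\mathbf{e}_H=\sum_{i=1}^k c_i(\mathbf{1}\mathbf{v}_i)\lambda_i^n=\sum_{i=1}^k c_is_i\lambda_i^n,
\]
which is \eqref{eq:3}; at $n=0$ one reads $\lambda_i^0=1$, consistently with $A_H^0=I$. Next, for \eqref{eq:4} I would start from $\mu(Y^n(f_H))=\mathbf{1}\big(\sum_{j=0}^n A_H^j\big)\mathbf{e}_H$, note that this equals $\sum_{j=0}^n\mu(X^j(f_H))$ (the points escaping before time $n+1$ split according to the exact escape time), interchange the two finite sums, and evaluate the resulting geometric sum on each eigencomponent:
\[
\mu(Y^n(f_H))=\sum_{j=0}^n\sum_{i=1}^k c_is_i\lambda_i^j=\sum_{i=1}^k c_is_i\sum_{j=0}^n\lambda_i^j=\sum_{i=1}^k c_is_i\,\frac{1-\lambda_i^{n+1}}{1-\lambda_i}.
\]
The last step uses $\sum_{j=0}^n\lambda_i^j=(1-\lambda_i^{n+1})/(1-\lambda_i)$, which is legitimate exactly because no $\lambda_i$ equals $1$ — this is the sole place the eigenvalue hypothesis enters.

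I do not expect a genuine obstacle: the statement is just the spectral decomposition of $A_H^n$ and $\sum_{j\le n}A_H^j$ read through the vector $\mathbf{e}_H$ and the linear functional $\mathbf{x}\mapsto\mathbf{1}\mathbf{x}$. The only points needing a word of care are bookkeeping ones — the $\lambda_i$ and $c_i$ may be complex, but this is harmless since the left-hand sides are manifestly real; the convention $0^0=1$ is used at $n=0$; and, in contrast to Corollary~\ref{corollary1}, no assumption $\rho(A_H)<1$ is needed here because every sum involved is finite.
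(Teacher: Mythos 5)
Your proof is correct and is exactly the intended derivation: expand $\mathbf{e}_H$ in the eigenbasis guaranteed by nondefectiveness, apply the two formulas of Theorem~\ref{theorem1}, and sum the finite geometric series, where the hypothesis that no eigenvalue equals $1$ is used only in the last step. The paper states this as an immediate corollary of Theorem~\ref{theorem1} without printing a proof, and your argument is the same route, with the bookkeeping remarks (complex $c_i$, $n=0$ case, no need for $\rho(A_H)<1$) handled correctly.
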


\begin{example}\label{ex:2}
Let the function $f:I\rightarrow I$ be the tent map considered in example \ref{ex:1} and let H=(0,1/4]. As $f_H\in\mathcal{L}$ one can calculate that $f_H$ has the weighted transition matrix
$$
A_H=\left[\begin{array}{cccc}
0 & 0 & 0 & 0\\
0 & 0 & 1/2 & 1/2\\
0 & 0 & 1/2 & 1/2\\
1/2 & 1/2 & 0 & 0
\end{array}\right].
$$

The matrix $A_H$ is nondefective as its eigenvalues $\sigma(A_H)=\{\frac{1+\sqrt{5}}{4},\frac{1-\sqrt{5}}{4},0,0\}$ corresponding respectively to the linearly independent eigenvectors
$$\mathbf{v}_1=\left[
\begin{array}{c}
0\\
\frac{1+\sqrt{5}}{4}\\
\frac{1+\sqrt{5}}{4}\\
1
\end{array}
\right],\mathbf{v}_2=\left[
\begin{array}{c}
0\\
\frac{1-\sqrt{5}}{4}\\
\frac{1-\sqrt{5}}{4}\\
1
\end{array}
\right],
\mathbf{v}_3=\left[
\begin{array}{c}
0\\
0\\
-1\\
1
\end{array}
\right],
\mathbf{v}_4=\left[
\begin{array}{c}
-1\\
1\\
0\\
0
\end{array}
\right].$$
Since the vector $\mathbf{e}_H=[1/4,0,0,0]^T$ can be written as
$$\mathbf{e}_H=\frac{5-\sqrt{5}}{20+20\sqrt{5}}\mathbf{v}_1-\frac{3+\sqrt{5}}{8\sqrt{5}}\mathbf{v}_2+\frac{1}{4}\mathbf{v}_3-\frac{1}{4}\mathbf{v}_4,$$
equations (\ref{eq:3}) and (\ref{eq:4}) in corollary \ref{corollary3} imply
\begin{align*}
\mu(X^n(f_H))&=\frac{1}{40}(5+\sqrt{5})\lambda_1^n+\frac{1}{40}(5-\sqrt{5})\lambda_2^n; \ \text{and}\\
\mu(Y^n(f_H))&=\frac{1}{40}(5+\sqrt{5})\frac{1-\lambda_1^{n+1}}{1-\lambda_1}+\frac{1}{40}(5-\sqrt{5})\frac{1-\lambda_2^{n+1}}{1-\lambda_2}\\
&=1-\left(\frac{1}{2}+\frac{1}{\sqrt{5}}\right)\lambda_1^{n+1}-\left(\frac{1}{2}-\frac{1}{\sqrt{5}}\right)\lambda_2^{n+1}.
\end{align*}

Note that as $\rho(A_H)<1$ then $\lim_{n\rightarrow\infty}P^n(f_H)=0$. Hence, the probability of surviving indefinitely in this system for a typical $x\in I$ is in fact zero. This can be seen in figure \ref{fig:2} where both $\mu(X^n(f_H))$ and $\mu(Y^n(f_H))$ are plotted.
\end{example}

\begin{figure}
    \begin{overpic}[scale=.5]{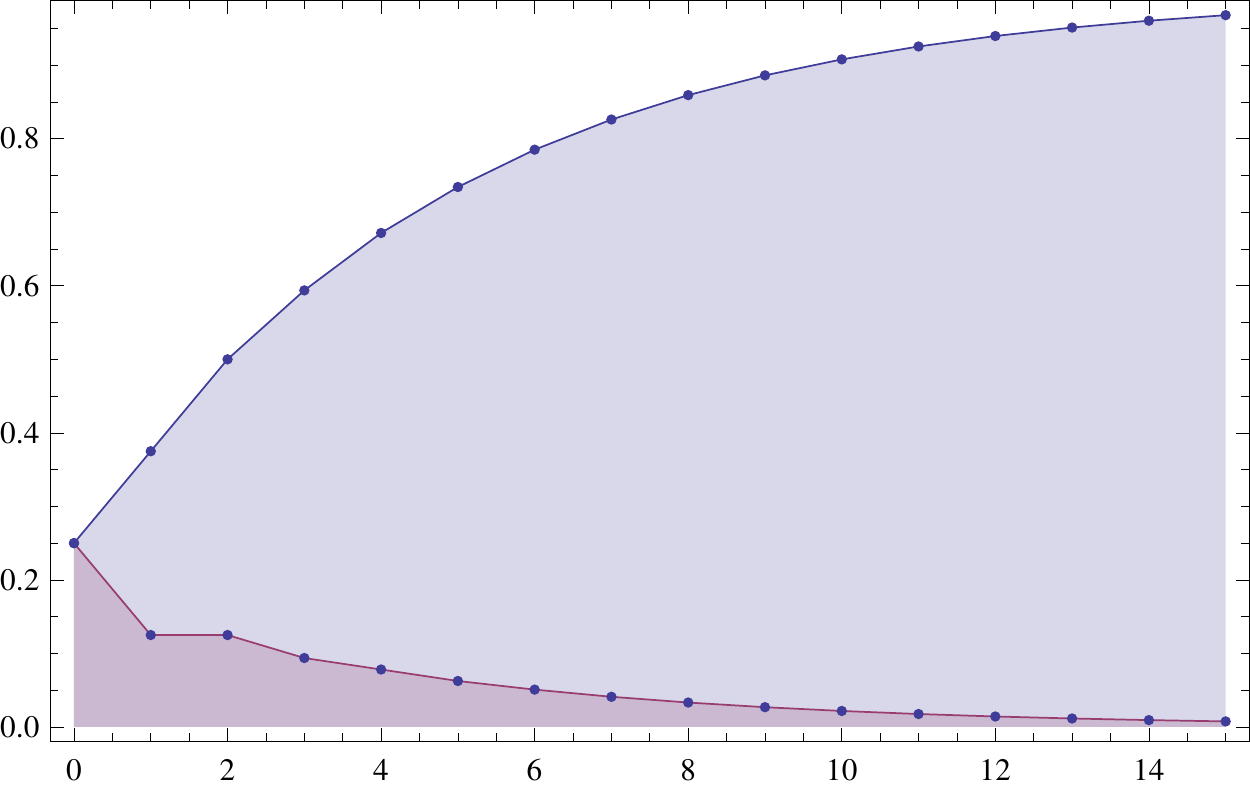}
    \put(40,-5){\small $n-$axis}
    \put(42,12){$\mu(X^n(f_H))$}
    \put(42,46){$\mu(Y^n(f_H))$}
    \end{overpic}
\caption{Plots of $\mu(X^n(f_H))$ and $\mu(Y^n(f_H))$ for $f_H:I\rightarrow I$ in example \ref{ex:2}.}\label{fig:2}
\end{figure}

\section{Nonlinear Estimates}\label{sec:4}
We now consider the open systems $f_H:I\rightarrow I$ where $f$ is allowed to be a nonlinear but differentiable function when restricted to the elements of $\xi$. The formulae we derive in this section allow us to give upper and lower bounds on $\mu(X^n(f_H))$ and $\mu(Y^n(f_H))$ for any finite time $n\geq 0$.

Suppose $H=\bigcup_{i\in\mathcal{I}}\xi_i$ for some $\mathcal{I}\subset M$. Let $\mathcal{N}$ be the open systems $f_H:I\rightarrow I$ such that
\begin{align*}
&\inf_{x\in\xi_i}|f_H^{\prime}(x)|>0 \ \ \text{for} \ \ i\notin\mathcal{I}; \ \ \text{and}\\
&\sup_{x\in\xi_i}|f_H^{\prime}(x)|<\infty \ \ \text{for} \ \ i\notin\mathcal{I}.
\end{align*}
To each $f_H\in\mathcal{N}$ there are two associated matrices similar to the weighted transition matrix $A_H$ defined for each open system in $\mathcal{L}$.

\begin{definition}
Let $f_H\in\mathcal{N}$ where $H=\bigcup_{i\in\mathcal{I}}\xi_i$ for some $\mathcal{I}\subset M$. The matrix $\underline{A}_H\in\mathbb{R}^{m\times m}$ is defined by
$$(\underline{A}_H)_{ij}=\begin{cases}
\displaystyle{\inf_{x\in\xi_{ij}}|f^\prime(x)|^{-1}} & \text{for} \ \ \xi_{ij}\neq\emptyset, \ i\notin\mathcal{I},\\
\ \ \ 0 & \text{otherwise}
\end{cases} \ \ 1\leq i,j\leq m.$$
Similarly, define the matrix $\overline{A}_H\in\mathbb{R}^{m\times m}$ by
$$(\overline{A}_H)_{ij}=\begin{cases}
\displaystyle{\sup_{x\in\xi_{ij}}|f^\prime(x)|^{-1}} & \text{for} \ \ \xi_{ij}\neq\emptyset, \ i\notin\mathcal{I}\\
\ \ \ 0 & \text{otherwise}
\end{cases}  \ \ 1\leq i,j\leq m.$$
\end{definition}

For $f_H\in\mathcal{N}$ and $n\geq 0$ let $$\underline{X}^n(f_H)=\mathbf{1}\underline{A}^n_H\mathbf{e}_H \ \text{and} \ \overline{X}^n(f_H)=\mathbf{1}\overline{A}^n_H\mathbf{e}_H;$$ $$\underline{Y}^n(f_H)=\mathbf{1}\big(\sum_{i=0}^m\underline{A}^i_H\big)\mathbf{e}_H \ \text{and} \ \overline{Y}^n(f_H)=\mathbf{1}\big(\sum_{i=0}^m\overline{A}^i_H\big)\mathbf{e}_H.$$

\begin{theorem}\label{theorem3}
If $f_H\in\mathcal{N}$ and $n\geq 0$ then $\underline{X}^n(f_H) \leq \mu(X^n(f_H)) \leq \overline{X}^n(f_H)$ and $\underline{Y}^n(f_H) \leq \mu(Y^n(f_H)) \leq \overline{Y}^n(f_H).$
\end{theorem}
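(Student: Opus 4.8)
The plan is to compare $\mu(X^n(f_H))$ cellwise with the piecewise-linear quantities $\mathbf{1}A_H^n\mathbf{e}_H$ of Theorem~\ref{theorem1}, the crucial input being a two-sided bound on the measure of preimages under $f$.

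First I would set up the cellwise bookkeeping. For $n\ge 0$ and $1\le i\le m$ put $a_i^n=\mu(X^n(f_H)\cap\xi_i)$ and $\mathbf{a}^n=[a_1^n,\dots,a_m^n]^T$, so that $\mu(X^n(f_H))=\mathbf{1}\mathbf{a}^n$. Since the $\xi_i$ are pairwise disjoint and $X^0(f_H)=H$, we get $\mathbf{a}^0=\mathbf{e}_H$. For $n\ge 1$ and $i\in\mathcal I$ we have $\xi_i\subseteq H$, hence $X^n(f_H)\cap\xi_i=\emptyset$ and $a_i^n=0$; for $i\notin\mathcal I$, a point $x\in\xi_i$ lies in $X^n(f_H)$ iff $f(x)\in X^{n-1}(f_H)$, so, splitting $X^{n-1}(f_H)$ over the cells $\xi_j$ and using $\xi_{ij}=\xi_i\cap f^{-1}(\xi_j)$,
$$a_i^n=\sum_{j=1}^m \mu\big(\xi_{ij}\cap f^{-1}(X^{n-1}(f_H)\cap\xi_j)\big).$$

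Then comes the key estimate. By the Markov property, for $\xi_{ij}\neq\emptyset$ the map $f$ carries $\xi_{ij}$ essentially bijectively onto $\xi_j$, and since $f_H\in\mathcal N$ its restriction there is Lipschitz with $|f'|$ bounded away from $0$ and $\infty$; the change-of-variables formula then gives, for every measurable $S\subseteq\xi_j$,
$$(\underline{A}_H)_{ij}\,\mu(S)\ \le\ \mu\big(\xi_{ij}\cap f^{-1}(S)\big)\ \le\ (\overline{A}_H)_{ij}\,\mu(S).$$
Applying this with $S=X^{n-1}(f_H)\cap\xi_j$, and recalling that the rows of $\underline{A}_H$ and $\overline{A}_H$ indexed by $\mathcal I$ vanish (matching $a_i^n=0$ for $i\in\mathcal I$), yields the entrywise matrix inequalities $\underline{A}_H\mathbf{a}^{n-1}\le\mathbf{a}^n\le\overline{A}_H\mathbf{a}^{n-1}$.

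Finally I would iterate. Because $\underline{A}_H$ and $\overline{A}_H$ have nonnegative entries, these one-step inequalities bootstrap by induction on $n$ to $\underline{A}_H^n\mathbf{e}_H\le\mathbf{a}^n\le\overline{A}_H^n\mathbf{e}_H$; multiplying on the left by the nonnegative vector $\mathbf{1}$ gives $\underline{X}^n(f_H)\le\mu(X^n(f_H))\le\overline{X}^n(f_H)$. The statement for $Y^n$ then follows by summation: the sets $X^0(f_H),\dots,X^n(f_H)$ partition $Y^n(f_H)$, so $\mu(Y^n(f_H))=\sum_{k=0}^n\mu(X^k(f_H))$, and bounding each term between $\underline{X}^k(f_H)$ and $\overline{X}^k(f_H)$ gives $\underline{Y}^n(f_H)\le\mu(Y^n(f_H))\le\overline{Y}^n(f_H)$. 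I expect the main obstacle to be the key estimate itself — namely, justifying that the Markov hypothesis makes $f|_{\xi_{ij}}$ a genuine bijection onto $\xi_j$ and that the regularity built into the definition of $\mathcal N$ suffices to run the change-of-variables argument on both sides; everything downstream of that is a nonnegative-matrix induction.
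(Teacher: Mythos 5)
Your proof is correct, and it is in substance the argument the paper itself uses: Theorem \ref{theorem3} is stated without a printed proof, but the mechanism appears inside the proof of Theorem \ref{thm:main}, where products of $\inf_{x\in\xi_{ij}}|f'(x)|^{-1}$ (resp.\ suprema) are shown to bound $\mu\{x\in\xi_i: f_H(x)\in\xi_k,\dots,f_H^n(x)\in\xi_j\}$ entrywise --- exactly your one-branch change-of-variables estimate, iterated as a sum over paths rather than by your vector induction $\underline{A}_H\mathbf{a}^{n-1}\le\mathbf{a}^n\le\overline{A}_H\mathbf{a}^{n-1}$. The ``obstacle'' you flag does close under the paper's hypotheses: on $\xi_i$ with $i\notin\mathcal{I}$ the derivative is nonvanishing, so by Darboux it has constant sign and $f|_{\xi_{ij}}$ is strictly monotone, while the Markov property gives $f(\xi_{ij})\supseteq\xi_j$ up to endpoints, which is all the bi-Lipschitz two-sided bound requires.
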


Theorem \ref{theorem3} allows us to bound the amount of phase space that escapes through $H$ at time $n$ and before time $n+1$. If the matrices $\underline{A}_H$ and $\overline{A}_H$ are nondefective then we have the following result similar to corollary \ref{corollary3}.

\begin{corollary}\label{corollary4}
Let $f_H\in\mathcal{N}$ and suppose both $\underline{A}_H$ and $\overline{A}_H$ are nondefective with eigenpairs $\{(\underline{\lambda}_1,\underline{\mathbf{v}}_1),\dots,(\underline{\lambda}_k,\underline{\mathbf{v}}_k)\}$ and $\{(\overline{\lambda}_1,\overline{\mathbf{v}}_1),\dots,(\overline{\lambda}_k,\overline{\mathbf{v}}_k)\}$ with no eigenvalue equal to 1. Then for each $n\geq 0$
\begin{align}
\sum_{i=1}^k \underline{c}_i \underline{s}_i \underline{\lambda}_{i}^{n}
\leq \mu(X^n(f_H)) \leq &
\sum_{i=1}^k \overline{c}_i \overline{s}_i \overline{\lambda}_{i}^{n}; \ \text{and}\label{eq:7.1}\\
\sum_{i=1}^k \underline{c}_i \underline{s}_i\left(\frac{1-\underline{\lambda}_{i}^{n+1}}{1-\underline{\lambda}_i}\right) \leq \mu(Y^n(f_H)) \leq &
\sum_{i=1}^k \overline{c}_i \overline{s}_i\left(\frac{1-\overline{\lambda}_{i}^{n+1}}{1-\overline{\lambda}_i}\right)\label{eq:8}
\end{align}
where $\underline{s}_i=\mathbf{1}\underline{\mathbf{v}}_i$, $\overline{s}_i=\mathbf{1}\overline{\mathbf{v}}_i$, $\displaystyle{\mathbf{e}_H=\sum_{i=1}^n\underline{c}_i\underline{\mathbf{v}}_i}$ and $\displaystyle{\mathbf{e}_H=\sum_{i=1}^n\overline{c}_i\overline{\mathbf{v}}_i}$.
\end{corollary}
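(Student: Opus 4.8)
The plan is to notice that the two inequality chains in Corollary \ref{corollary4} are obtained by rewriting the quantities $\underline{X}^n(f_H)$, $\overline{X}^n(f_H)$, $\underline{Y}^n(f_H)$, $\overline{Y}^n(f_H)$ that already appear in Theorem \ref{theorem3} in spectral form, in exactly the same way that Corollary \ref{corollary3} rewrites $\mu(X^n(f_H))$ and $\mu(Y^n(f_H))$ for $f_H\in\mathcal{L}$. So the argument splits into a short linear-algebra computation applied to $\underline{A}_H$ and $\overline{A}_H$, followed by an appeal to Theorem \ref{theorem3}. All computations are carried out over $\mathbb{C}$.

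First I would treat $\underline{A}_H$. Since $\underline{A}_H$ is nondefective we may take $\underline{\mathbf{v}}_1,\dots,\underline{\mathbf{v}}_k$ to be an eigenbasis of $\mathbb{C}^m$, so there are scalars $\underline{c}_1,\dots,\underline{c}_k\in\mathbb{C}$, uniquely determined by linear independence, with $\mathbf{e}_H=\sum_{i=1}^k\underline{c}_i\underline{\mathbf{v}}_i$. Applying $\underline{A}_H^n$ and using $\underline{A}_H\underline{\mathbf{v}}_i=\underline{\lambda}_i\underline{\mathbf{v}}_i$ gives $\underline{A}_H^n\mathbf{e}_H=\sum_{i=1}^k\underline{c}_i\underline{\lambda}_i^{\,n}\underline{\mathbf{v}}_i$, and left-multiplying by $\mathbf{1}$ yields
\[
\underline{X}^n(f_H)=\mathbf{1}\underline{A}_H^n\mathbf{e}_H=\sum_{i=1}^k\underline{c}_i\underline{s}_i\underline{\lambda}_i^{\,n},\qquad \underline{s}_i=\mathbf{1}\underline{\mathbf{v}}_i.
\]
For the surviving-set quantity I would sum the geometric series eigenvalue by eigenvalue: since no $\underline{\lambda}_i$ equals $1$,
\[
\Big(\sum_{j=0}^n\underline{A}_H^j\Big)\mathbf{e}_H=\sum_{i=1}^k\underline{c}_i\Big(\sum_{j=0}^n\underline{\lambda}_i^{\,j}\Big)\underline{\mathbf{v}}_i=\sum_{i=1}^k\underline{c}_i\,\frac{1-\underline{\lambda}_i^{\,n+1}}{1-\underline{\lambda}_i}\,\underline{\mathbf{v}}_i,
\]
and left-multiplying by $\mathbf{1}$ identifies $\underline{Y}^n(f_H)$ with the left-hand side of (\ref{eq:8}). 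Replacing $\underline{A}_H$ by $\overline{A}_H$ throughout, with $\overline{\mathbf{v}}_i$, $\overline{\lambda}_i$, $\overline{c}_i$, $\overline{s}_i$ in place of the underlined quantities, produces the identical expressions for $\overline{X}^n(f_H)$ and $\overline{Y}^n(f_H)$.

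Finally I would invoke Theorem \ref{theorem3}, which gives $\underline{X}^n(f_H)\le\mu(X^n(f_H))\le\overline{X}^n(f_H)$ and $\underline{Y}^n(f_H)\le\mu(Y^n(f_H))\le\overline{Y}^n(f_H)$ for all $n\ge 0$; substituting the spectral expressions just derived into these inequalities gives (\ref{eq:7.1}) and (\ref{eq:8}), completing the proof. I do not expect a genuine obstacle here: the substantive content is entirely in Theorem \ref{theorem3}, and what remains is bookkeeping. The only points needing care are that the expansion of $\mathbf{e}_H$ in eigenvectors exists and is unique (which is precisely the hypothesis that $\underline{A}_H$ and $\overline{A}_H$ are nondefective) and that the geometric-series identity requires every eigenvalue to differ from $1$ (also assumed); one should also remark that the scalars $\underline{c}_i,\overline{c}_i$ and the sums $\underline{s}_i,\overline{s}_i$ may be complex even though the bounding quantities themselves are real.
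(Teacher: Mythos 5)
Your argument is correct and is exactly the route the paper intends: diagonalize $\underline{A}_H$ and $\overline{A}_H$ via their eigenbases, expand $\mathbf{e}_H$, sum the geometric series (using that no eigenvalue equals $1$), and substitute the resulting spectral expressions for $\underline{X}^n(f_H)$, $\overline{X}^n(f_H)$, $\underline{Y}^n(f_H)$, $\overline{Y}^n(f_H)$ into the bounds of Theorem \ref{theorem3}, just as Corollary \ref{corollary3} follows from Theorem \ref{theorem1}. No discrepancy with the paper's (implicit) proof; your remarks on uniqueness of the coefficients and possible complex values are appropriate bookkeeping.
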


The upper and lower bounds given in (\ref{eq:7.1}) are $\underline{X}^n(f_H)$ and $\overline{X}^n(f_H)$ respectively. The upper and lower bounds given in (\ref{eq:8}) are $\underline{Y}^n(f_H)$ and $\overline{Y}^n(f_H)$ respectively.

\begin{figure}
    \begin{tabular}{cc}
    \begin{overpic}[scale=.7]{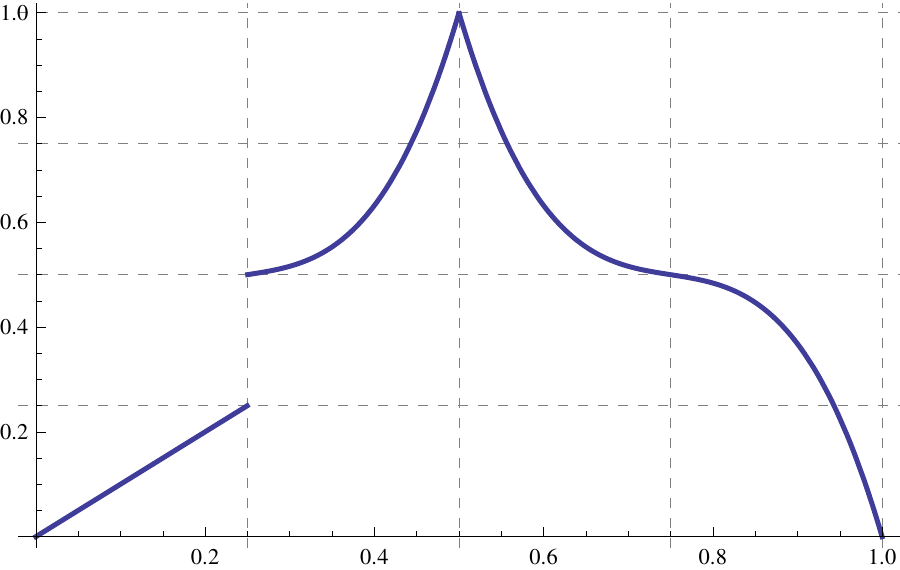}

    \put(37,-5){$\xi_2$}
    \put(6,-5){$\xi_1=H$}
    \put(60,-5){$\xi_3$}
    \put(85,-5){$\xi_4$}

    \end{overpic} &
    \begin{overpic}[scale=.45]{holes1.pdf}
    \put(42,-1){$\Gamma_H$}




    \put(43,56){$v_2$}
    \put(-2,39){$v_1$}
    \put(43,11){$v_4$}
    \put(91,42){$v_3$}


    \end{overpic}
    \end{tabular}
\caption{The transition graph $\Gamma_H$ (right) of the open system $g_H:I\rightarrow I$ (left) in example \ref{ex:3}.}\label{fig:0}
\end{figure}

\begin{example}\label{ex:3}
Consider the function $g:I\rightarrow I$ given by
$$g(x)=\begin{cases}
\frac{11}{2}x - 21 x^2 + 28 x^3 &  0\leq x\leq 1/2\\
\frac{11}{2}(1 - x) - 21(1 - x)^2 + 28(1 - x)^3 & 1/2 < x\leq 1
\end{cases}$$ with Markov partition $\xi=\{(0,1/4],(1/4,1/2],(1/2,3/4],(3/4,1]\}$ and $H=(0,1/4]$. The function $g:I\rightarrow I$ can be considered to be a nonlinear version of the tent map $f:I\rightarrow I$ in example \ref{ex:1}. In fact both systems have the same transition graph (see figure \ref{fig:0}).

For $g_H:I\rightarrow I$ one can compute $\xi_{23}=(1/4,.44]$, $\xi_{23}=(.44,1/2]$, $\xi_{23}=(1/2,.55]$, $\xi_{23}=(.55,.3/4]$, $\xi_{23}=(3/4,.94]$, and $\xi_{23}=(.94,1]$. From this we find
$$
\underline{A}_H=\left[\begin{array}{cccc}
0 & 0 & 0 & 0\\
0 & 0 & 0.29 & 2/11\\
0 & 0 & 0.29 & 2/11\\
2/11 & 0.29 & 0 & 0
  \end{array}\right] \ \ \text{and} \ \
\overline{A}_{H}=\left[\begin{array}{cccc}
0 & 0 & 0 & 0\\
0 & 0 & 4 & 0.29\\
0 & 0 & 4 & 0.29\\
0.29 & 4 & 0 & 0
  \end{array}\right].
$$

As $\underline{A}_H$ and $\overline{A}_H$ are nondefective then using corollary \ref{corollary4} one can compute that
\begin{equation}\label{eq:7}
0.15\underline{\lambda}_1^n+0.15\underline{\lambda}_2^n-0.6\leq X^n(g_H)\leq -0.02\overline{\lambda}_1^n+0.02\overline{\lambda}_2^n+.25
\end{equation}
where $\underline{\lambda}_1=0.41$, $\underline{\lambda}_2=-0.12$, $\overline{\lambda}_1=4.27$, and $\overline{\lambda}_2=-0.27$. Plotting the inequalities in (\ref{eq:7}) yields the picture in figure \ref{fig:4}. Here the shaded area indicates the region in which $\mu(X^n(g_H))$ must lie.
\end{example}

\begin{figure}
    \begin{overpic}[scale=.65]{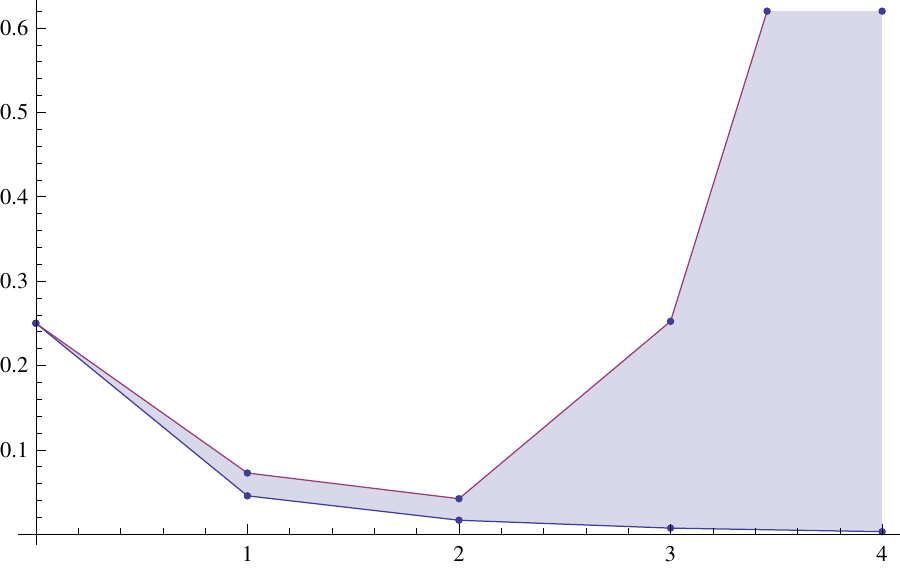}
    \put(65,8){$\underline{X}^n(g_H)$}
    \put(52,28){$\overline{X}^n(g_H)$}
    \put(45,-2.5){\small $n-$axis}
    \end{overpic}
\caption{The upper bounds $\overline{X}^n(g_H)$ and lower bounds $\underline{X}^n(g_H)$ for $\mu(X^n(g_H))$ are shown for the open system $g_H:I\rightarrow I$ in example \ref{ex:3}.}\label{fig:4}
\end{figure}

\section{Improved Escape Estimates}
In this section we define a delayed first return map of an open system $f_H\in\mathcal{N}$, which we will use to improve the escape estimates given in theorem \ref{theorem3}. A key step in this procedure is to choose a particular vertex set of $\Gamma_H$ over which this map will be defined. This requires that we know the cycle structure of $\Gamma_H$.

A \textit{path} $\mathcal{P}$ in the graph $\Gamma_H=(V,E_H)$ is an ordered sequence of distinct vertices $v_1,\dots,v_k\in V$ such that $e_{i,i+1}\in E$ for $1\leq i\leq k-1$. If the vertices $v_1$ and $v_k$ are the same then $\mathcal{P}$ is a \textit{cycle}. If $S\subseteq V$ where $V$ is the vertex set of the graph we will write $\bar{S}=V\setminus S$.

\begin{definition}
Let $H=\bigcup_{i\in\mathcal{I}}\xi_i$ for some $\mathcal{I}\subset M$ and let $\Gamma_H=(V,E_H)$. The set $S\subseteq V=\{v_1,\dots,v_m\}$ is an \emph{open structural set} of $\Gamma_H$ if $v_i\in S$ for $i\in\mathcal{I}$ and $\Gamma_H|_{\bar{S}}$ has no cycles.
\end{definition}

Structural sets were first defined in \cite{BW10} where they were used to gain improved estimates of a dynamical network's stability. Later in \cite{BW09} they were used to improve the eigenvalue estimates of Gershgorin et. al. Here, our goal is to extend their use to improve our estimates of $\mu(X^n(f_H))$ and $\mu(Y^n(f_H))$.

For the open system $f_H:I\rightarrow I$ we let $st(\Gamma_H)$ denote the set of all open structural sets of $\Gamma_H$. If $S\in st(\Gamma_H)$ we let $\mathcal{I}_S=\{i\in M: v_i\in S\}$ be the \emph{index set} of $S$ and $\xi_S=\bigcup_{i\in\mathcal{I}_S}\xi_i$.

\begin{definition}
Let $S\in st(\Gamma_H)$. For $x\in I$ we let $\gamma(x)=i_0i_1\dots i_t$ where $i_j=k$ if $f^j_H(x)\in\xi_k$ and $t$ is the smallest $k>0$ such that $f^k_H(x)\in\xi_S$. The set
$$\Omega_S=\{\gamma:\gamma=\gamma(x) \ \text{for some} \ x\in I\setminus H\}$$
are the \emph{admissible sequences} of $f_H$ with respect to $S$.
\end{definition}

For $x\in I$ we say $\gamma(x)=i_0i_1\dots i_t$ has length $|\gamma(x)|=t$. The reason $|\gamma(x)|<\infty$ is that the graph $\Gamma_H|_{\bar{S}}$ has no cycles. Hence, after a finite number of steps $f^t_H(x)$ must enter $\xi_S$.

\begin{definition}\label{def:dfrmap}
Let $S\in st(\Gamma_H)$. For $x_0\in I$ and $k\geq 0$ we inductively define $x_{k+1}=Rf_S(x_k,\dots,x_0)$ where
$$x_{k+1}=
\begin{cases}
f_H^{|\gamma(x_k)|}(x_k) &\text{if} \ \ x_{k-i}=x_k \ \ \text{for each} \ \ 0\leq i\leq |\gamma(x_k)|-1,\\
x_k \ \ &\text{otherwise}.
\end{cases}
$$ The function $Rf_S:I^{k+1}\rightarrow I$ is called the \emph{delayed first return map} of $f_H$ with respect to $S$. The sequence $x_0,x_1,x_2,\dots$ is the \emph{orbit} of $x_0$ under $Rf_S$.
\end{definition}

If $T=\max_{x\in I}|\gamma(x)|$ then strictly speaking $x_{k+1}=Rf_S(x_k,\dots,x_{\tau})$ for some $\tau<T$. The map $Rf_S$ acts almost like a first return map of $f_H$ to the set $\xi_S$. The difference is that a return to $\xi_S$ does not happen instantaneously (as it would happen in the case of a first return map) but is delayed so that the trajectory of a point under $f_H$ and $Rf_S$ coincide after a return to $\xi_S$.

For $n\geq 0$ we let $Rf_S^n(x_0)=x_n$ and define
\begin{align*}
X^n(Rf_S)&=\{x\in I:Rf_S^n(x)\in H, \ Rf_S^k(x)\notin H, \ 0\leq k<n\}; \ \text{and}\\
Y^n(Rf_S)&=\{x\in I:Rf_S^k(x)\in H, \ \text{for some} \ k, \ 0\leq k\leq n\}.
\end{align*}

\begin{lemma}\label{lemma:1}
If $S\in st_{\xi}(\Gamma_{f_H})$ and $n\geq 0$ then $X^n(f_H)=X^n(Rf_S)$.
\end{lemma}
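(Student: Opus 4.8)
The plan is to observe that, for $n\geq 1$, the set $X^n(f_H)$ (resp. $X^n(Rf_S)$) is precisely the set of $x$ whose $f_H$-orbit (resp. $Rf_S$-orbit) enters $H$ for the first time at time exactly $n$, and then to show that these ``first hitting times'' agree point by point. The case $n=0$ is immediate since $X^0(f_H)=H=X^0(Rf_S)$, so fix $n\geq 1$; a point of $H$ lies in neither set, so it suffices to treat $x\notin H$.

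First I would describe the orbit of $x=x_0$ under $Rf_S$ explicitly. Put $y_0=x_0$, and define inductively $t_j=|\gamma(y_j)|$, $n_0=0$, $n_{j+1}=n_j+t_j$, and $y_{j+1}=f_H^{t_j}(y_j)$; each $t_j$ is finite because $\Gamma_H|_{\bar S}$ is acyclic, as already observed after the definition of admissible sequences. Unwinding Definition \ref{def:dfrmap}, I would prove by induction on $j$ that $y_j=f_H^{n_j}(x_0)$ and that $Rf_S^{\,n}(x_0)=y_j$ for every $n$ with $n_j\leq n<n_{j+1}$; in particular $Rf_S^{\,n_j}(x_0)=f_H^{\,n_j}(x_0)$. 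In words: the $Rf_S$-orbit rests at $x_0$ for the first $t_0$ steps, then jumps to the first return $f_H^{t_0}(x_0)\in\xi_S$ of the $f_H$-orbit to $\xi_S$, rests there $t_1$ steps, jumps to the next $\xi_S$-return, and so on, so it visits exactly the successive $\xi_S$-returns $y_1,y_2,\dots$ of the $f_H$-orbit together with $x_0$, and it reaches $y_j$ at precisely the time $n_j$ at which $y_j$ appears in the $f_H$-orbit. This is the step I expect to demand the most care: the bookkeeping of the delay condition $x_{k-i}=x_k$ must be tracked carefully (including the boundary case $k<|\gamma(x_k)|-1$, in which the missing history forces the ``otherwise'' branch), as well as the degenerate situation $y_j\in H$, where $t_j=1$ and $y_{j+1}=y_j$.

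Finally I would use the defining property of $t_j$ as the least positive return time of $y_j=f_H^{n_j}(x_0)$ to $\xi_S$: this forces $f_H^{\,n}(x_0)\notin\xi_S$ whenever $n_j<n<n_{j+1}$, and since $H\subseteq\xi_S$ we get $f_H^{\,n}(x_0)\notin H$ there. Hence the $f_H$-orbit of $x_0$ can enter $H$ only at one of the times $n_1,n_2,\dots$, and $f_H^{\,n_j}(x_0)=y_j$; so if $j^\ast$ denotes the least index with $y_{j^\ast}\in H$ (when one exists), the $f_H$-orbit first enters $H$ exactly at $n=n_{j^\ast}$, i.e. $x_0\in X^n(f_H)$ iff $n=n_{j^\ast}$. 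By the second paragraph the $Rf_S$-orbit only ever visits $y_0,y_1,y_2,\dots$, occupying $y_j$ throughout the block $n_j\leq n<n_{j+1}$, so it too first enters $H$ exactly when it reaches $y_{j^\ast}$, namely at $n=n_{j^\ast}$, giving $x_0\in X^n(Rf_S)$ iff $n=n_{j^\ast}$. If no $y_j$ lies in $H$ then neither orbit ever enters $H$ and $x_0$ lies in neither set for any $n\geq 1$. Comparing the two characterizations yields $x_0\in X^n(f_H)\iff x_0\in X^n(Rf_S)$ for all $n\geq 0$, which is the assertion.
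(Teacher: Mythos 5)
Your proposal is correct and follows essentially the same route as the paper's own proof: both arguments show that the $Rf_S$-orbit coincides with the $f_H$-orbit at the successive return times to $\xi_S$ (the paper's ``$Rf_S^t(x_0)=f_H^t(x_0)$, continuing in this manner''), and then conclude from $H\subseteq\xi_S$ that the first entrance into $H$ occurs at the same time for both systems. Your version merely makes explicit the induction over the return times $n_j$ and the delay bookkeeping (including the initial and periodic boundary cases) that the paper leaves implicit.
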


\begin{proof}
For $x_0\in I$ let $\tilde{\gamma}(x_0)=i_0i_1\dots$ where $i_j=k$ if $f^j_{H}(x_0)\in\xi_k$. Choosing $S\in st(\Gamma_H)$ let $\tilde{\gamma}_S(x_0)=\ell_0\ell_1\dots$ where $\ell_j=k$ if $Rf_S^j(x_0)\in\xi_k$. Let $t>0$ be the smallest number such that $i_t\in\mathcal{I}_S$. Then $\gamma(x_0)=i_0i_1\dots i_t$ and definition \ref{def:dfrmap} implies $Rf_S^t(x_0)=f^t_H(x_0)$. Therefore, $i_t=\ell_t$ where $t\in\mathcal{I_S}$.

Continuing in this manner it follows that $i_j=\ell_j$ for each $j\in\mathcal{I}_S$. Since $H\subseteq\xi_S$ the point $x_0$, if it escapes, will escape for both $f_H$ and $Rf_S$ at exactly the same time. This implies the result.
\end{proof}

The major idea in this section is that one can use $Rf_S$ to study the escape of $f_H$ through $H$. However, the weighted transition matrix of $Rf_S:I^k\rightarrow I$ cannot be defined in the same way that we have defined either $\underline{A}_H$ or $\overline{A}_H$. To define a transition matrix for $Rf_S$ we require the following.

For $S\in st(\Gamma_H)$ let
$$M_S=M\cup\{\gamma;i:\gamma\in\Omega_S,0<i<|\gamma|\}$$
If $\gamma=i_0\dots,i_t$ we identify the index $\gamma;0$ with $i_0$ and the index $\gamma;t$ with $i_t$. We also let $\xi_\gamma=\{x\in I: \gamma(x)=\gamma\}$ for each admissible sequence $\gamma\in\Omega_S$, which simply extends our notation given by (\ref{eq:part}) in section \ref{sec:3}.

\begin{definition}\label{def:rutmat}
For $S\in st(\Gamma_{H})$ let $\underline{A}_S$ be the matrix with rows and columns indexed by elements of $M_S$ where
\begin{equation}\label{def:rutmat}
(\underline{A}_S)_{ij}=
\begin{cases}
\displaystyle{\inf_{x\in\xi_\gamma}|\big(f^{|\gamma|}(x)\big)^{\prime}|^{-1}} &\text{if} \ i=\gamma;|\gamma|-1, \ j=\gamma;|\gamma|, \ \text{for some} \ \gamma\in\Omega_S\\
1 &\text{if} \ i=\gamma;k-1, \ j=\gamma;k, \ k\neq |\gamma|,  \ \text{for some} \ \gamma\in\Omega_S\\
0 &\text{otherwise}.
\end{cases}
\end{equation}
We call $\underline{A}_S$ the \emph{lower transition matrix} of $Rf_S$. The matrix $\overline{A}_S$ defined by replacing the infimum in (\ref{def:rutmat}) by a supremum is the \emph{upper transition matrix} of $Rf_S$.
\end{definition}

Let $\mathbf{1}_S$ be the $1\times|M_S|$ vector given by
$$(\textbf{1}_S)_{i}=\begin{cases} 1 & \text{if} \ \ i\in M,\\
0 & \text{otherwise}.
\end{cases}$$
Let $\mathbf{e}_S$ be the $|M_S|\times 1$ vector given by
$$(\textbf{e}_S)_{i}=\begin{cases} \mu(\xi_i) & \text{if} \ \ i\in \mathcal{I},\\
0 & \text{otherwise}.
\end{cases}$$
Lastly, for $n\geq 0$ let
$$\underline{X}^n(Rf_S)=\mathbf{1}_S\underline{A}^n_S\mathbf{e}_S \ \text{and} \ \overline{X}^n(Rf_S)=\mathbf{1}_S\overline{A}^n_S\mathbf{e}_S;$$ $$\underline{Y}^n(Rf_S)=\mathbf{1}_S\big(\sum_{i=0}^n\underline{A}^i_S\big)\mathbf{e}_S \ \text{and} \ \overline{Y}^n(Rf_S)=\mathbf{1}_S\big(\sum_{i=0}^n\overline{A}^i_S\big)\mathbf{e}_S.$$
Using these quantities we give the following improved escape estimates.

\begin{theorem}\label{thm:main}
Let $f_H\in\mathcal{N}$ and suppose $S\in st(\Gamma_H)$. If $n\geq 0$ then
\begin{align*}
\underline{X}^n(f_H) &\leq \underline{X}^n(Rf_S) \leq \mu(X^n(f_H)) \leq \overline{X}^n(Rf_S) \leq \overline{X}^n(f_H); \ \text{and}\\
\underline{Y}^n(f_H) &\leq \underline{Y}^n(Rf_S) \leq \mu(Y^n(f_H)) \leq \overline{Y}^n(Rf_S) \leq \overline{Y}^n(f_H).
\end{align*}
\end{theorem}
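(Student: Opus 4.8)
The plan is to prove the two chains of inequalities by establishing three things in sequence: (1) the middle inequalities $\underline{X}^n(Rf_S)\le \mu(X^n(f_H))\le \overline{X}^n(Rf_S)$, (2) the outer inequalities $\underline{X}^n(f_H)\le \underline{X}^n(Rf_S)$ and $\overline{X}^n(Rf_S)\le\overline{X}^n(f_H)$, and then (3) deduce the $Y$-statements by summing over time, exactly as in the passage from Theorem~\ref{theorem1} to Corollary~\ref{corollary1}. For step (1), the key observation is that $\underline{A}_S$ and $\overline{A}_S$ are built to exactly mimic Theorem~\ref{theorem1}/Theorem~\ref{theorem3} but for the map $Rf_S$: the ``$1$'' entries along the internal segments of an admissible sequence $\gamma$ record that $f_H$ carries $\xi_\gamma$ onto its image with the partition elements along $\gamma$ traversed deterministically, and the weighted entry at the last step $\gamma;|\gamma|-1\to\gamma;|\gamma|$ records $\inf$ (resp.\ $\sup$) of $|(f^{|\gamma|})'|^{-1}$ over $\xi_\gamma$, which by the chain rule and the mean value theorem brackets the Lebesgue measure $\mu(f_H^{|\gamma|}(\xi_\gamma))/\mu(\xi_\gamma)$. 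Combined with Lemma~\ref{lemma:1}, which gives $X^n(f_H)=X^n(Rf_S)$, one gets that $\mathbf{1}_S\underline{A}_S^n\mathbf{e}_S$ and $\mathbf{1}_S\overline{A}_S^n\mathbf{e}_S$ are a lower and an upper bound for $\mu(X^n(Rf_S))=\mu(X^n(f_H))$; this is the analogue of Theorem~\ref{theorem3} applied to $Rf_S$ rather than $f_H$, and I would prove it by the same induction on $n$, tracking how a cylinder set $\xi_\gamma\subset\xi_i$ is transported one $Rf_S$-step at a time.

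For step (2) — the genuinely new content, showing the return-map bounds are sharper than the direct bounds — the idea is that $\underline A_S$ (resp.\ $\overline A_S$) is an ``isospectral-type'' expansion of $\underline A_H$ (resp.\ $\overline A_H$) along the non-structural vertices, so that $\mathbf{1}_S\underline A_S^n\mathbf{e}_S$ refines $\mathbf{1}\underline A_H^n\mathbf{e}_H$ by replacing each product of infima taken edge-by-edge along a path through $\bar S$ by a single infimum of the product taken over the whole cylinder $\xi_\gamma$. Concretely, a length-$t$ walk in $\Gamma_H$ from $i$ into $\mathcal I$ that passes through $\bar S$ before landing contributes $\prod_{\ell}\inf_{\xi_{i_\ell i_{\ell+1}}}|f'|^{-1}$ to $\mathbf{1}\underline A_H^t\mathbf{e}_H$, whereas the corresponding admissible sequence $\gamma$ contributes $\inf_{x\in\xi_\gamma}\prod_\ell |f'(f_H^\ell x)|^{-1}$ to $\mathbf{1}_S\underline A_S^t\mathbf{e}_S$; since $\xi_\gamma\subseteq\bigcap_\ell (f_H^\ell)^{-1}(\xi_{i_\ell i_{\ell+1}})$, the infimum of the product dominates the product of the infima, giving $\underline X^n(f_H)\le\underline X^n(Rf_S)$, and the reverse inequality $\inf(\text{product})\le\prod(\text{sup})$ — together with the same argument for $\overline A$ — gives $\overline X^n(Rf_S)\le\overline X^n(f_H)$. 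The care here is bookkeeping: one must check that the ``$1$'' entries in $\underline A_S,\overline A_S$ make the matrix power $\underline A_S^n$ collapse the internal segments of each $\gamma$ correctly, i.e.\ that a single $Rf_S$-step of length $|\gamma|$ corresponds to $|\gamma|$ applications of $\underline A_S$ and the intermediate indices $\gamma;k$ are visited with weight $1$, so that summing over admissible $\gamma$ exactly reproduces the walk sum above with the products un-split. I would formalize this as a bijection between (a) walks in $\Gamma_H$ of a given length ending in $\mathcal I$ and never revisiting a structural vertex except at the ends, concatenated appropriately, and (b) $\underline A_S$-walks in $M_S$, under which the weights satisfy the stated inequalities termwise.

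Finally, for step (3), since all four $Y$-quantities are obtained from the corresponding $X$-quantities by applying $\sum_{i=0}^n$ and all the inequalities in steps (1)–(2) hold at every single time $i\le n$ (and all entries involved are nonnegative), the $Y$-chain follows by summing the $X$-chain over $i=0,\dots,n$; this is purely formal once steps (1) and (2) are in place.

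The main obstacle I expect is step (2): proving $\underline X^n(f_H)\le\underline X^n(Rf_S)$ cleanly requires the right combinatorial encoding of how $\underline A_S^n$ decomposes into first-return blocks, and in particular verifying that no spurious terms appear (e.g.\ from walks that re-enter $\bar S$ multiple times) — this is exactly the place where the acyclicity of $\Gamma_H|_{\bar S}$ from the definition of an open structural set is essential, since it guarantees each excursion into $\bar S$ has bounded length and the block decomposition of a walk is unique. The inequality itself is then the elementary fact that $\inf\prod \ge \prod\inf$ and $\inf\prod\le\prod\sup$ on the relevant nested domains, but getting the indexing to line up so that this fact can be invoked termwise is the crux.
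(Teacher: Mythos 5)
Your proposal is correct and follows essentially the same route as the paper: the middle inequalities come from bounding cylinder-set measures by powers of $\underline{A}_S,\overline{A}_S$ (induction on $n$, with the delayed indices carrying weight $1$), the outer inequalities come from the termwise comparison ``product of infima over $\xi_{i_\ell i_{\ell+1}}$ $\leq$ infimum of the product over the nested cylinder $\xi_\gamma$'' (and its supremum counterpart), and the $Y$-bounds follow by summing over time. The only cosmetic difference is that the paper carries out the middle inequalities directly on the entries $(\underline{A}_S^n)_{ij}(\mathbf{e}_S)_j$ versus $\mu\{x\in\xi_i: f_H^n(x)\in\xi_j\}$ rather than routing through Lemma \ref{lemma:1} and a ``Theorem \ref{theorem3} for $Rf_S$''.
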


Theorem \ref{thm:main} together will lemma \ref{lemma:1} imply that the escape of $f_H$ through $H$ is better approximated by considering any of its delayed first return maps $Rf_S$ than $f_H$ itself. We now give a proof of theorem \ref{thm:main}.

\begin{proof}
For $S\in st(\Gamma_H)$ suppose $i\in M\setminus\mathcal{I}$ and $j\in\mathcal{I}$. Then
$$(\underline{A}_S)_{ij}(\mathbf{e}_S)_j=
\begin{cases}
\inf_{x\in\xi_{ij}}|f^\prime(x)|^{-1}\mu(\xi_j) \ &\text{if} \ \xi_{ij}\neq\emptyset,\\
0 &\text{otherwise}
\end{cases} \
\leq \ \mu\{x\in\xi_i:f_H(x)\in\xi_j\}.$$

To show that a similar formula holds for larger powers of $\underline{A}_S$ suppose $k\in\mathcal{I}_S$. If $ik,kj\in\Omega_S$ then
\begin{align*}
(\underline{A}_S)_{ik}(\underline{A}_S)_{kj}(\mathbf{e}_S)_j
=&\inf_{x\in\xi_{ik}}|f^\prime(x)|^{-1} \inf_{x\in\xi_{kj}}|f^\prime(x)|^{-1}\mu(\xi_j)\\
\leq&\mu\{x\in\xi_i:f_H(x)\in\xi_k,f^2_H(x)\in\xi_j\}.
\end{align*}
If either $ik\notin\Omega_S$ or $kj\notin\Omega_S$ then $(\underline{A}_S)_{ik}(\underline{A}_S)_{kj}(\mathbf{e}_S)_j=0$.

Suppose $k\in M\setminus\mathcal{I}_S$. If $ikj\in\Omega_S$ then $ikj;1\in M_S$ and
\begin{align*}
(\underline{A}_S)_{i,ikj;1}(\underline{A}_S)_{ikj;1,j}(\mathbf{e}_S)_j
=&1\cdot\inf_{x\in\xi_{ikj}}|(f^2(x))^\prime|^{-1}\mu(\xi_j)\\
\leq&\mu\{x\in\xi_i:f_H(x)\in\xi_k,f^2_H(x)\in\xi_j\}.
\end{align*}
If $ikj\notin\Omega_S$ then $ijk;1\notin M_S$. Since
\begin{align*}
(\underline{A}_S^2)_{ij}(\mathbf{e}_S)_j&=\sum_{k\in M_S}(\underline{A}_S)_{ik}(\underline{A}_S)_{kj}\mu(\xi_j)\\
&=\sum_{ik,kj\in \Omega_S}(\underline{A}_S)_{ik}(\underline{A}_S)_{kj}\mu(\xi_j)+\sum_{ikj\in \Omega_S}(\underline{A}_S)_{i,ikj;1}(\underline{A}_S)_{ikj;1,j}\mu(\xi_j)\\
&\leq\sum_{k\in\mathcal{I}_S\cup (M\setminus\mathcal{I}_S)}\mu\{x\in\xi_i:f_H(x)\in\xi_k,f^2_H(x)\in\xi_j\}\\
&=\mu\{x\in\xi_i:f^2_H(x)\in\xi_j\}.
\end{align*}

Continuing in this manner it follows that
\begin{equation}\label{eq:10}
(\underline{A}_S^n)_{ij}(\mathbf{e}_S)_j\leq\mu\{x\in\xi_i:f^n_H(x)\in\xi_j\}
\end{equation}
for $i\in M\setminus\mathcal{I}$, $j\in\mathcal{I}$, and $n\geq 1$. Since $(\mathbf{e}_S)_j=0$ if $j\notin \mathcal{I}$ then for $n\geq 1$ equation (\ref{eq:10}) implies
\begin{align*}
\mathbf{1}_S\underline{A}_S^n\mathbf{e}_S=\sum_{i\in M}\sum_{j\in M_S}(\underline{A}_S^n)_{ij}(\mathbf{e}_S)_j&\leq\sum_{i\in M\setminus\mathcal{I}}\mu\{x\in\xi_i:f^n_H(x)\in H\}\\
&=\mu\{x\in I\setminus H: f^n_H(x)\in H\}.
\end{align*}
As $\mathbf{1}_S\underline{A}_S^0\mathbf{e}_S=\mu(H)$ then $\underline{X}^n(Rf_S) \leq \mu(X^n(f_H)) \leq \overline{X}^n(Rf_S)$ for $n\geq 0$ where the second inequality follows by using the same argument with the matrix $\overline{A}_S$.

To show that $\underline{X}^n(f_H) \leq \underline{X}^n(Rf_S)$ we again suppose that $i\in M\setminus\mathcal{I}$ and $j\in\mathcal{I}$. Then we have
$$(\underline{A}_H)_{ij}(\mathbf{e}_H)_j=
\begin{cases}
\inf_{x\in\xi_{ij}}|f^\prime(x)|^{-1}\mu(\xi_j) \ &\text{if} \ \xi_{ij}\neq\emptyset,\\
0 &\text{otherwise}
\end{cases} \
= \ (\underline{A}_S)_{ij}(\mathbf{e}_S)_j.$$
For larger matrix powers we have
\begin{align*}
\sum_{k\in M\setminus\mathcal{I}_S}(\underline{A}_H)_{ik}(\underline{A}_H)_{kj}\mu(\xi_j)
=&\sum_{k\in M\setminus\mathcal{I}_S}\inf_{x\in\xi_{ik}}|f^\prime(x)|^{-1} \inf_{x\in\xi_{kj}}|f^\prime(x)|^{-1}\mu(\xi_j)\\
\leq\sum_{k\in M\setminus\mathcal{I}_S}1\cdot\inf_{x\in\xi_{ikj}}|(f^2(x))^\prime|^{-1}\mu(\xi_j)
=&\sum_{ikj\in \Omega_S}(\underline{A}_S)_{i,ikj;1}(\underline{A}_S)_{ikj;1,j}(\mathbf{e}_S)_j.
\end{align*}
From this it follows that
\begin{align*}
(\underline{A}_H^2)_{ij}(\mathbf{e}_H)_j
=& \sum_{k\in \mathcal{I}_S}(\underline{A}_H)_{ik}(\underline{A}_H)_{kj}\mu(\xi_j) +  \sum_{k\in M\setminus\mathcal{I}_S}(\underline{A}_H)_{ik}(\underline{A}_H)_{kj}\mu(\xi_j)\\
\leq& \sum_{ik,kj\in\Omega_S}(\underline{A}_S)_{ik}(\underline{A}_S)_{kj}\mu(\xi_j) +  \sum_{ikj\in\Omega_S}(\underline{A}_H)_{i,ikj;1}(\underline{A}_H)_{ikj;1,j}\mu(\xi_j)\\
=&(\underline{A}_S^2)_{ij}(\mathbf{e}_S)_j.
\end{align*}
Again, continuing in this manner we have
$(\underline{A}_H^n)_{ij}(\mathbf{e}_H)_j\leq (\underline{A}_S^n)_{ij}(\mathbf{e}_S)_j$ for $i\in M\setminus\mathcal{I}$, $j\in\mathcal{I}$, and $n\geq 1$. As $\mathbf{1}_H\underline{A}_H^0\mathbf{e}_H=\mu{H}=\mathbf{1}_S\underline{A}_S^0\mathbf{e}_S$ then
\begin{equation*}
\mathbf{1}_H\underline{A}_H^n\mathbf{e}_H=\sum_{i\in M}\sum_{j\in \mathcal{I}}(\underline{A}_H^n)_{ij}(\mathbf{e}_H)_j\leq\sum_{i\in M}\sum_{j\in M_S}(\underline{A}_S^n)_{ij}(\mathbf{e}_S)_j=\mathbf{1}_S\underline{A}_S^n\mathbf{e}_S
\end{equation*}
for $n\geq 0$. Hence, $\underline{X}^n(f_H) \leq \underline{X}^n(Rf_S)$.

By using the same argument with the matrix $\overline{A}_S$ we obtain the inequality $\underline{X}^n(Rf_S) \leq \underline{X}^n(f_H)$. The second set of inequalities in theorem \ref{thm:main} then follow, which completes the proof.
\end{proof}

\begin{figure}
    \begin{overpic}[scale=.7]{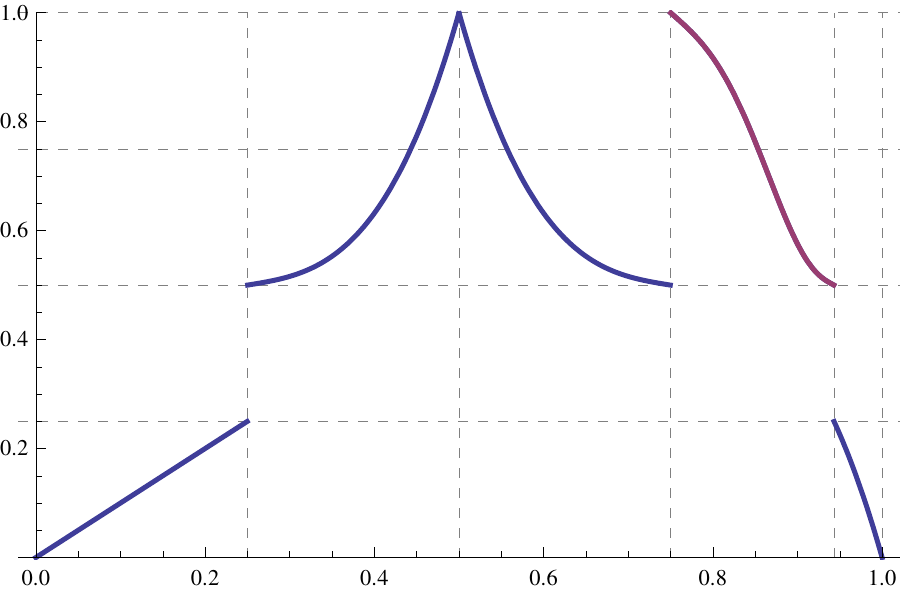}
    \put(37,-5){$\xi_2$}
    \put(6,-5){$\xi_1=H$}
    \put(60,-5){$\xi_3$}
    \put(81,-5){$\xi_{42}$}
    \put(92,-5){$\xi_{41}$}
    \end{overpic}
\caption{The delayed first return map $Rg_S:I\rightarrow I$ in example \ref{ex:4} where $Rg_S$ is delayed on the set $\xi_{42}$ shown in red.}\label{fig:5}
\end{figure}

\begin{example}\label{ex:4} Consider the open system $g_H:I\rightarrow I$ given in example \ref{ex:3}. Observe that the vertex set $S=\{v_1,v_3,v_4\}$ is an open structural set of $\Gamma_H$ since $v_1$ is the vertex that corresponds to $H$ and the graph $\Gamma_H|_{\bar{S}}=\{v_2\}$ has no cycles (see figure \ref{fig:0}).

The delayed first return map $Rg_S$ can be written as
$$Rg_S(x_k)=
\begin{cases}
g_H(x_k) \ \ &\text{if} \ x_k\notin \xi_{42},\\
g_H^2(x_k) \ \ &\text{if} \ x_k,x_{k-1}\in \xi_{42},\\
x_k          &\text{otherwise}.
\end{cases}$$
The map $Rg_S$ is shown in figure \ref{fig:5} as a one-dimensional map but is colored red on $\xi_{42}$ to indicate that in fact the system is delayed on this set. Specifically, $g_H(x)$ is shown in blue and $g^2_H(x)$ is shown in red where the trajectories of $Rg_H$ stay in $\xi_{42}$ for two time-steps before leaving.

To compute the upper and lower transition matrices of $Rf_H$ note that the system's admissible sequences are given by $\Omega_S=\{23,24,33,34,424,423,41\}$ implying
\begin{equation}\label{eq:last}
M_S=\{1,2,3,4,424;1,423;1\}.
\end{equation}
From $\Omega_S$ we compute that $\xi_{424}=(3/4,.85]$ and $\xi_{423}=(.85,.94]$. The other partition elements have been computed in example \ref{ex:3}. Using the order given in (\ref{eq:last}) we obtain
$$
\underline{A}_S=\left[\begin{array}{cccccc}
0 & 0 & 0&0&0&0\\
0 & 0 & .29&2/11&0&0\\
0 & 0 & .29&2/11&0&0\\
2/11 & 0 & 0&0&1&1\\
0 & 0 & 0&.26&0&0\\
0 & .26 & 0&0&0&0\\
  \end{array}\right], \ \
\overline{A}_S=\left[\begin{array}{cccccccc}
0 & 0 & 0&0&0&0\\
0 & 0 & 4&.29&0&0\\
0 & 0 & 4&.29&0&0\\
.29 & 0 & 0&0&1&1\\
0 & 0 & 0&.73&0&0\\
0 & 1.18 & 0&0&0&0\\
  \end{array}\right].
$$
The vectors $\mathbf{1}_S$ and $\mathbf{e}_S$ are given by
$$\mathbf{1}_S=[1,1,1,1,0,0] \ \text{and} \ \mathbf{e}_S=[1/4,0,0,0,0,0]^T.$$
Figure \ref{fig:6} shows that $\underline{X}^n(g_H)<\underline{X}^n(Rg_S)$ and $\overline{X}^n(Rg_S)<\overline{X}^n(g_H)$ for a number of $n-$values and indicates the extent to which using the delayed first return map $Rg_H$ improves our estimates of $\mu(X^n(g_H))$. The shaded regions in these graphs represent the difference between these upper and lower estimates respectively.

\end{example}

\begin{figure}
    \begin{tabular}{cc}
    \begin{overpic}[scale=.55]{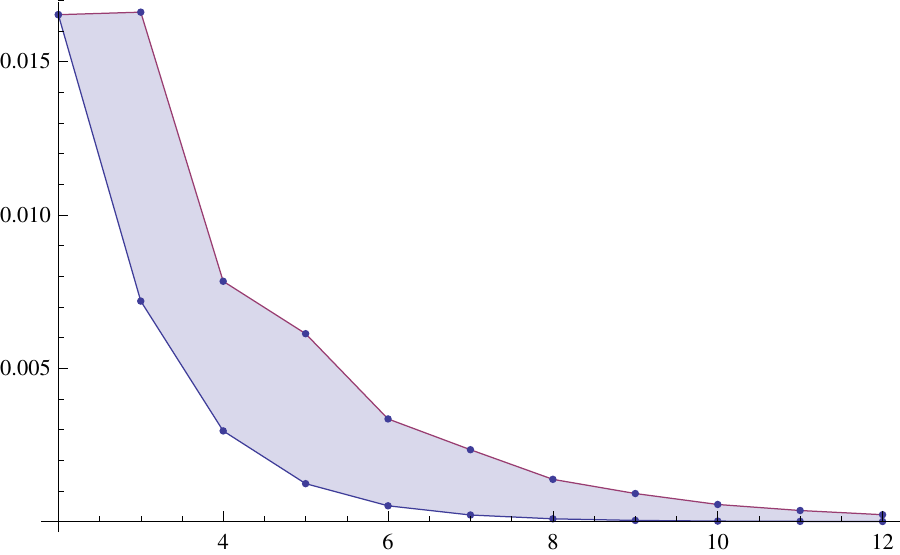}

    \put(37,26){$\underline{X}^n(Rg_S)$}
    \put(7,6){$\underline{X}^n(g_H)$}
    \put(36,-5){\small \emph{lower bounds}}

    \end{overpic} &
    \begin{overpic}[scale=.55]{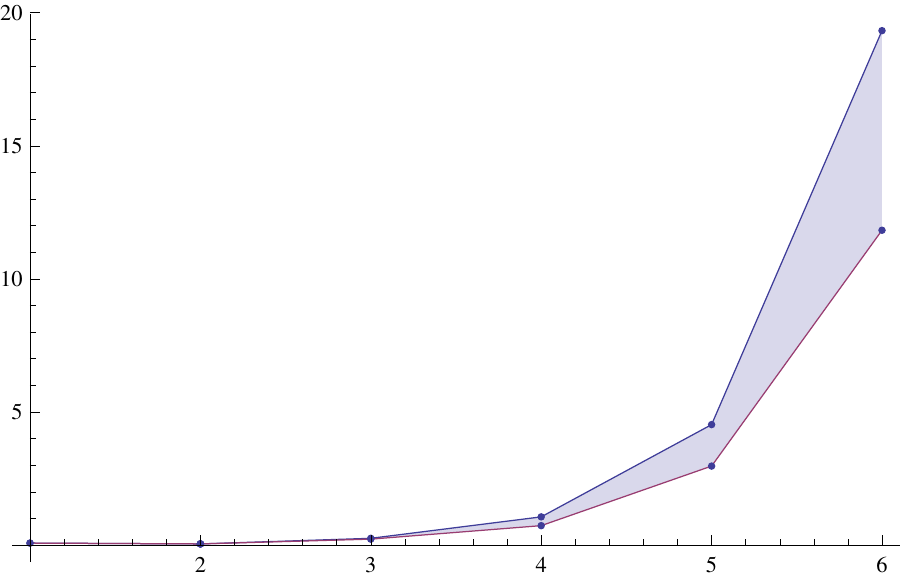}

    \put(56,26){$\overline{X}^n(g_H)$}
    \put(80,6){$\overline{X}^n(Rg_S)$}
    \put(35,-5){\small \emph{upper bounds}}

    \end{overpic}
    \end{tabular}
\caption{Comparison between $\underline{X}^n(g_H)$ and $\underline{X}^n(Rg_H)$ (left) and $\overline{X}^n(g_H)$ and $\overline{X}^n(Rg_H)$ (right) from examples \ref{ex:2} and \ref{ex:3}.}\label{fig:6}
\end{figure}

\section{Conclusion}
Our results demonstrate that the theory of isospectral transformations can be effectively applied to open dynamical systems to obtain sharper estimates of a system's survival probabilities. Previous applications of this theory have allowed for improvements in the classical estimates of matrix spectra and in obtaining stronger sufficient conditions for the global stability of dynamical networks. We do not doubt that this theory can be applied to another problems as well.

Concerning the results of the present paper it is easy to see that the improved estimates we obtained can also be found for a much broader class of open systems via the same technique. For instance, one could extend these techniques to higher dimensional systems.


\begin{thebibliography}{9}
\bibitem{AB10} V. S. Afraimovich, L. A. Bunimovich, Which Hole is Leaking the Most: A Topological Approach to Study Open Systems, \textit{Nonlinearity} \textbf{23}, 2010, 644-657.

\bibitem{AB07} V. Afraimovich and L. Bunimovich. Dynamical networks: interplay of topology, interactions, and local dynamics, \textit{Nonlinearity} \textbf{20}, 2007, 1761-1771.

\bibitem{BB11} Yu. Bakhtin, L. A. Bunimovich, The Optimal Sink and the Best Source in a Markov Chain \textit{J. Stat. Physics} \textbf{143}, 2011, 943-954.

\bibitem{BB05} M. L. Blank, L. A. Bunimovich, Long Range Action in Networks of Chaotic Elements, \textit{Nonlinearity} \textbf{19}, 2006, 330-345.

\bibitem{B12} L. A. Bunimovich, Fair Dice-Like Hyperbolic Systems, \textit{Contemporary Mathematics} \textbf{467}, 2012, 79-87.

\bibitem{BW10} L. A. Bunimovich, B. Z. Webb, Isospectral Graph Transformations, Spectral Equivalence, and Global Stability of Dynamical Networks. \textit{Nonlinearity} \textbf{25}, 2012, 211-254.

\bibitem{BW09} L. A. Bunimovich, B. Z. Webb, Dynamical Networks, Isospectral Graph Reductions, and Improved Estimates of Matrices' Spectra, \textit{Linear Algebra and its Applications} \textbf{437}, Issue 7, 1429-1457, 2012.

\bibitem{BY10} L. A. Bunimovich, A. Yurchenko, Where to Place a Hole to Achieve the Fastest Escape Rate, \textit{Israel J. Math.} \textbf{182}, 2011, 229-252.

\bibitem{DW12} M. Demers, P. Wright, Behavior of the Escape Rate Function in Hyperbolic Dynamical Systems, \textit{Nonlinearity} \textbf{25}, 2012, 2133-2150.

\bibitem{Varga09} R. Varga, Gershgorin and His Circles (Germany: Springer-Verlag Berlin Heidelberg) 2004.

\end{thebibliography}
\end{document}